\newtheorem{theorem}{Theorem}[section]
\newtheorem{lemma}[theorem]{Lemma}
\newtheorem{cor}[theorem]{Corollary}
\newtheorem{definition}[theorem]{Definition}
\newtheorem{remark}{Remark}[section]
\newtheorem{notation}{Notation}[section]
\newtheorem{question}{Question}
\newcommand{\R}{\ensuremath{\mathbb{R}}}
\newcommand{\E}{\ensuremath{\mathbb{E}}}
\newcommand{\pr}{\ensuremath{\mathbb{P}}}
\newcommand{\secref}[1]{Sec.~\ref{#1}}
\newcommand{\thmref}[1]{Thm.~\ref{#1}}
\newcommand{\lemref}[1]{Lemma~\ref{#1}}
\newcommand{\corref}[1]{Cor.~\ref{#1}}
\newcommand{\defref}[1]{Definition~\ref{#1}}
\newcommand{\Phibar}{\overline{\Phi}}
\title{Finding Hidden Cliques in Linear Time with High Probability}
\author{Yael Dekel \\ The Hebrew University \\ {\tt yaelvin@cs.huji.ac.il} 
\and Ori Gurel-Gurevich \\ University of British Columbia \\ {\tt origurel@math.ubc.ca}
\and Yuval Peres \\ Microsoft Research \\ {\tt peres@microsoft.com}}
\begin{document}

\date{}
\maketitle

\begin{abstract}
We are given a graph $G$ with $n$ vertices, where a random subset of
$k$ vertices has been made into a clique, and the remaining edges are
chosen independently with probability $\tfrac12$. This random graph model
is denoted $G(n,\tfrac12,k)$. The hidden clique problem is to design an
algorithm that finds the $k$-clique in polynomial time with high
probability. An algorithm due to Alon, Krivelevich and Sudakov uses
spectral techniques to find the hidden clique with high probability
when $k = c \sqrt{n}$ for a sufficiently large constant $c >
0$. Recently, an algorithm that solves the same problem was proposed
by Feige and Ron. It has the advantages of being simpler and more
intuitive, and of an improved running time of $O(n^2)$. However, the
analysis in the paper gives success probability of only $2/3$. In this
paper we present a new algorithm for finding hidden cliques that both
runs in time $O(n^2)$, and has a failure probability that is less than
polynomially small.
\end{abstract}

\section{Introduction}
A clique in a graph $G$ is a subset of its vertices any two of which
are connected by an edge. The problem of determining the size of the
maximum clique in a graph is known to be NP-complete \cite{Karp72}. It
has also been proved \cite{Feigeetal91,AroraSafra92,Aroraetal92} that
assuming P $\neq$ NP, there exists a constant $b>0$ for which it is
hard to approximate the size of the maximum clique within a factor of
$n^b$. Therefore, it is natural to investigate the hardness of this
problem in the average case. 

The Erd\"{o}s R\'{e}nyi random graph model, also denoted $G(n,\tfrac12)$,
is a probability measure on graphs with $n$ vertices. In this model, a
random graph is generated by choosing each pair of vertices
independently with probability $\tfrac12$ to be an edge. It is known that
with probability tending to $1$ as $n$ tends to infinity, the size of
the largest clique in $G(n,\tfrac12)$ is $(2 + o(1)) \log n$. There exists
a polynomial time algorithm (see for example
\cite{GrimmetMcDiarmid75}) that finds a clique of size $(1 + o(1))
\log n$ in $G(n,\tfrac12)$ with high probability, but even though in
expectation $G(n,\tfrac12)$ contains many cliques of size $(1 +
\varepsilon) \log n$ for any fixed $0 < \varepsilon < 1$, there is no
known polynomial time algorithm that finds one. It is plausible to
conjecture that this problem is computationally hard, and this
hardness has been used in several cryptographic applications
\cite{Kucera91,JuelsPeinado00}.

Finding a large clique may be easier in models where the graphs
contain larger cliques. Define, therefore, the hidden clique model,
denoted by $G(n,\tfrac12,k)$. In this model, a random $n$ vertex graph
is generated by randomly choosing $k$ vertices to form a clique, and
choosing every other pair of vertices independently with probability
$\tfrac12$ to be an edge. Jerrum \cite{Jerrum92} and Ku\v{c}era
\cite{Kucera95} suggested this model independently and posed the
problem of finding the hidden clique. When $k \geq c_0 \sqrt{n \log
  n}$ for some sufficiently large constant $c_0$, Ku\v{c}era observed
\cite[\S Thm. 6.1]{Kucera95} that the hidden clique can be found with
high probability by taking the $k$ highest degree vertices in the
graph. For $k = c \sqrt{n}$, there is an algorithm due to Alon,
Krivelevich and Sudakov \cite{AlonKrivelevichSudakov98} that finds the
hidden clique with high probability when $c$ is sufficiently large
using spectral techniques. In a more recent paper \cite{FeigeRon10},
Feige and Ron propose a simple algorithm that runs in time $O(n^2)$
and finds the hidden clique for $k=c \sqrt{n}$ with probability at
least $2/3$. In this paper we present a new algorithm that has the
advantages of both algorithms, as it runs in time $O(n^2)$, and fails
with probability that is less than polynomially small in $n$. The
algorithm has three phases, and it uses two parameters: $0 < \alpha <
1$ and $\beta > 0$. In the first phase, we iteratively find subgraphs
of the input graph $G$. Denote these subgraphs by $G = G_o \supset G_1
\supset G_2 \supset \cdots$. Given $G_i$, we define $G_{i+1}$ as
follows: Pick a random subset of vertices $S_i \subseteq V(G_i)$ that
contains each vertex with probability $\alpha$. Define $\tilde{V}_i$
as the set that contains all the vertices in $G_i$ that are not in
$S_i$, that have at least $\tfrac12 |S_i| + \beta
\tfrac{\sqrt{|S_i|}}2$ neighbors in $S_i$, namely
\[
\tilde{V}_i = \big\{ v \in V(G_i) \setminus S_i: \big| \big\{ u \in
S_i: (u,v) \in E(G_i) \big\} \big| \geq \tfrac12 |S_i| + \beta
\tfrac{\sqrt{|S_i|}}2 \big\}~.
\]
Define $G_{i+1}$ to be the induced subgraph of $G_i$ containing only
the vertices in $\tilde{V}_i$. We choose $\alpha$ and $\beta$ in such
a way that the relative size of the hidden clique grows with each
iteration.  We repeat the process $t$ times, until we are left with a
subgraph where the hidden clique is large enough so we can continue to
the second phase. A logarithmic number of iterations is enough. For
the exact way of choosing $\alpha, \beta$ and $t$, see the proof of
\lemref{totalfailureprobability}.

In the second phase, we find $\tilde{K}$, the subset of the hidden
clique contained in $G_t$. This is done by estimating the number of
clique vertices in $G_t$ by $k_t$, then defining $K'$ as the set of
$k_t$ largest degree vertices in $G_t$, and letting $\tilde{K}$
contain all the vertices in $G_t$ that have at least $\tfrac{3k_t}4$
neighbors in $K'$. In the third phase of the algorithm, we find the
rest of the hidden clique using $\tilde{K}$. This is done by letting
$G'$ be the induced subgraph of $G$ containing $\tilde{K}$ and all its
common neighbors. Let $K^*$ be the set of the $k$ largest degree
vertices in $G'$. Then $K^*$ is the set returned by the algorithm as
the candidate for the hidden clique.

\begin{theorem}
\label{thetheorem}
If $c > c_0$ then there exist $\alpha, \beta$ such that, given $G \in
G(n,\tfrac12, c \sqrt{n})$, the probability that $K^* = K^*(\alpha,
\beta)$ is the hidden clique is at least $1 -
\mathrm{e}^{-\Theta(n^{\varepsilon_0})}$ for some $\varepsilon_0 =
\varepsilon_0(c)$.
\end{theorem}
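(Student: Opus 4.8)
\noindent\emph{Proof plan.} The plan is to follow the three phases of the algorithm, preceded by one structural observation that makes all the later conditioning legitimate. Since $S_j\subseteq V(G_j)$ whereas $V(G_{j+1})\subseteq V(G_j)\setminus S_j$, the sampled sets $S_0,\dots,S_{t-1}$ are pairwise disjoint and all disjoint from $V(G_t)$, and the entire outcome of Phase~1 is a function only of the edges between surviving vertices and the sampled sets --- never of an edge internal to a surviving subgraph (apart from the forced clique edges). Consequently, conditioned on the whole history of Phase~1, the induced subgraph $G_t$ is distributed as $G(n_t,\tfrac12,k_t)$ with planted clique $\tilde K=K\cap V(G_t)$, where $n_t:=|V(G_t)|$ and $k_t:=|\tilde K|$; the analogous statement holds at each intermediate step for the yet-uninspected internal edges. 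For the parameters, fix a small constant $\alpha\in(0,\tfrac12)$ and take $\beta=\Theta(\sqrt\alpha\,c)$ large enough that $\Phibar(\beta)<1-\alpha$ but still $\le\tfrac12\sqrt\alpha\,c$ (so that clique vertices survive, see Phase~1). Put $\gamma:=(1-\alpha)\Phibar(\beta)<1$, $\lambda:=\bigl((1-\alpha)/\Phibar(\beta)\bigr)^{1/2}>1$, and $r_i:=k_i/\sqrt{n_i}$; in the idealized recursion $k_{i+1}=(1-\alpha)k_i$, $n_{i+1}=\gamma n_i$ one gets $r_{i+1}=\lambda r_i$, so the relative strength of the clique grows geometrically. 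I would run $t=\Theta(\log n)$ iterations, stopping the moment $k_t\ge n_t^{1/2+\eta}$ for a small fixed $\eta>0$ (which is what Phase~2 needs); a short computation then gives $n_t=\Theta(n^{\varepsilon_0})$ with $\varepsilon_0=\varepsilon_0(c):=\tfrac12-\tfrac{\log(1/(1-\alpha))}{2\log(1/\Phibar(\beta))}-O(\eta)\in(0,\tfrac12)$ --- the exponent appearing in the theorem, which tends to $\tfrac12$ as $c\to\infty$ --- and $k_t=\Theta(n^{(1/2+\eta)\varepsilon_0})$, both polynomial in $n$. (These are essentially the choices of \lemref{totalfailureprobability}.)

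For Phase~1 I would argue by a bootstrap on $i$, carrying the event $\mathcal E_i$ that for all $j\le i$ one has $n_j=\gamma^jn\,(1+o(1))$ and $k_j\ge(1-o(1))(1-\alpha)^jk$ with explicit slack, and proving $\pr[\neg\mathcal E_{i+1}\mid\mathcal E_i]\le e^{-\Theta(n^{\varepsilon_0})}$ from three estimates, all legitimate by the observation above. First, $|S_i|$ and $|S_i\cap K_i|$ are within a $(1+o(1))$ factor of $\alpha n_i$ and $\alpha k_i$ by Chernoff, except with probability $e^{-\Theta(\alpha n_i)}\le e^{-\Theta(\alpha n^{\varepsilon_0})}$. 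Second, each non-clique vertex $v\in(V(G_i)\setminus S_i)\setminus K_i$ enters $\tilde V_i$ with probability $\Phibar(\beta)(1+o(1))$ (a binomial upper tail against its Gaussian limit, valid since $|S_i|\to\infty$), and these events are independent given $S_i$ as they concern disjoint edges, so $|\tilde V_i\setminus K|$ concentrates and $n_{i+1}=\gamma^{i+1}n\,(1+o(1))$ except with probability $e^{-\Theta(\gamma n_i)}$. Third, a clique vertex $v\in K_i\setminus S_i$ fails the threshold only if a $\mathrm{Bin}(m_i,\tfrac12)$ variable with $m_i\approx\alpha(n_i-k_i)$ falls below its mean by more than $\tfrac12\bigl(\alpha k_i-\beta\sqrt{\alpha n_i}\bigr)=\tfrac{\sqrt{\alpha n_i}}{2}\bigl(\sqrt\alpha\,r_i-\beta\bigr)$, which by Hoeffding has probability at most $\mu_i:=e^{-\Omega((\sqrt\alpha\,r_i-\beta)^2)}\le e^{-\Omega(\alpha c^2)}$ using $r_i\ge(1-o(1))c$ on $\mathcal E_i$; the total number of clique vertices ever lost in Phase~1 is a sum of conditionally independent indicators of mean $\le\sum_i k_i\mu_i=O(k\,e^{-\Omega(\alpha c^2)})$ (dominated by $i=0$), hence a small constant fraction of $k$ once $c>c_0$, except with probability $e^{-\Omega(k\,e^{-\Omega(\alpha c^2)})}=e^{-\Omega(\sqrt n)}\le e^{-\Theta(n^{\varepsilon_0})}$. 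A standard ``first index at which $\mathcal E$ fails'' argument removes the mild circularity that $\mu_i$ depends on $r_i$, and a union bound over the $t=\Theta(\log n)$ iterations leaves the Phase~1 failure probability at $e^{-\Theta(n^{\varepsilon_0})}$.

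Now condition on $\mathcal E_t$, so $G_t\sim G(n_t,\tfrac12,k_t)$ with $n_t=\Theta(n^{\varepsilon_0})$ and $r_t=k_t/\sqrt{n_t}\ge n_t^{\eta}$. The algorithm's estimate of $k_t$ is within a $(1+o(1))$ factor of $|\tilde K|$ except with probability $e^{-\Theta(n^{\varepsilon_0})}$ (it is a concentrated quantity, predictable from $n,k,\alpha,t$). In $G_t$ a clique vertex has degree $k_t-1+\mathrm{Bin}(n_t-k_t,\tfrac12)$, exceeding $\tfrac{n_t}2+\tfrac{k_t}3$, while a non-clique vertex has degree $\mathrm{Bin}(n_t-1,\tfrac12)$, below $\tfrac{n_t}2+\tfrac{k_t}4$, each statement except with probability $e^{-\Omega(k_t^2/n_t)}=e^{-\Omega(n_t^{2\eta})}$; a union bound over the $n_t$ vertices makes every clique degree exceed every non-clique degree, so $K'$ (the top $\approx k_t$ degrees) equals $\tilde K$. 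Consequently every clique vertex has $\ge k_t-1>\tfrac34 k_t$ neighbours in $K'$, whereas a non-clique vertex has $\approx\tfrac12 k_t<\tfrac34 k_t$ neighbours there except with probability $e^{-\Omega(k_t)}$, so the set returned by Phase~2 is exactly $K\cap V(G_t)$, and the $K'/\tilde K$ two-step leaves room to spare against any residual error in $K'$ or in the estimate. For Phase~3: every vertex of $K$ is a common neighbour of $\tilde K$ in $G$, so $K\subseteq V(G')$; a non-clique vertex $u$ is a common neighbour of $\tilde K$ with probability at most $(\tfrac12+o(1))^{k_t}$ (its edges to $\tilde K$ are fair and independent when $u\in V(G_t)$, and only mildly biased upward when $u$ was sampled into some $S_j$), so the expected number of non-clique vertices in $V(G')$ is at most $n(\tfrac12+o(1))^{k_t}=2^{-\Omega(k_t)}$ because $k_t\gg\log n$; hence with probability $1-2^{-\Omega(k_t)}$ we have $V(G')=K$ exactly, and $K^*$, the $k$ largest degrees of the $k$-vertex graph $G'$, equals $K$. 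Summing the failure probabilities of the three phases (all of the form $e^{-n^{\Omega(1)}}$, with the smallest exponent defining $\varepsilon_0$) gives the claimed bound $1-e^{-\Theta(n^{\varepsilon_0})}$.

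The real difficulty is concentrated in Phase~1. One part is bookkeeping: every concentration estimate must be applied to genuinely independent coin flips, which is exactly what the disjointness of the $S_j$'s and the ``internal edges are never inspected'' observation guarantee --- without them the binomial tails controlling clique and non-clique survival would be entangled with the conditioning. The more substantive point is that the algorithm genuinely loses a constant fraction of the clique vertices in the first iterations, since the per-vertex clique-survival probability is only $1-e^{-\Theta(\alpha c^2)}$ and not $1-e^{-n^{\Omega(1)}}$; one therefore cannot pretend the clique survives intact, and must instead track $k_i$ through a recursion with a controlled multiplicative slack, prove via a union bound over prefixes that the slack stays below a small constant --- this is precisely where the hypothesis $c>c_0$ is used, to make the per-step loss fraction small enough for the bootstrap to close --- and only then is the geometric growth of the relative density guaranteed. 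Finally, balancing the two competing geometric rates, $n_i$ shrinking like $\gamma^i$ against $r_i$ growing like $\lambda^i$, so that $n_t$ is still a fixed polynomial in $n$ at the moment the clique becomes large enough for Phase~2, is what pins down $\varepsilon_0(c)$; once the right invariant is in place, Phases~2 and 3 and the union bound over the $\Theta(\log n)$ iterations are comparatively routine.
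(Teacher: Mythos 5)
The central gap is in Phase~1: your parameter scheme does not prove the theorem for all $c>c_0$ with $c_0\approx 1.65$ as defined in \defref{c0}, but only for sufficiently large $c$. You restrict to $\alpha<\tfrac12$ and $\beta\le\tfrac12 c\sqrt\alpha$, bound the per-round clique loss by $\mu=\Phibar(c\sqrt\alpha-\beta)\le e^{-\Omega(\alpha c^2)}$, budget the total loss additively as a ``small constant fraction of $k$'' dominated by $i=0$, and carry the invariant $k_j\ge(1-o(1))(1-\alpha)^j k$, so that the relative density grows by $\lambda=\sqrt{(1-\alpha)/\Phibar(\beta)}$ per round. Near $c_0$ this is quantitatively false. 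For $c=1.66$ and any admissible pair, $\beta\le\tfrac12c\sqrt\alpha\le 0.59$, hence $\mu=\Phibar(c\sqrt\alpha-\beta)\ge\Phibar(0.59)\approx 0.28$: the clique loses more than a quarter of its non-sampled vertices in \emph{every} iteration, the loss compounds multiplicatively rather than summing to $O(k\,e^{-\Omega(\alpha c^2)})$, and the invariant already fails after one round. Moreover the true growth ratio is $\lambda(1-\mu)=(1-\alpha)\Phibar(\beta-c\sqrt\alpha)\big/\sqrt{(1-\alpha)\Phibar(\beta)}=\rho/\sqrt\tau$, and within your constraints at $c=1.66$ it stays below $1$ (e.g.\ $\alpha$ near $\tfrac12$, $\beta=0.59$ gives about $0.97$; $\alpha=0.3$, $\beta=0.45$ gives about $0.99$), so the relative density actually shrinks. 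The paper succeeds down to $c_0$ precisely because it accepts a constant-fraction loss and tracks it exactly through $\rho=(1-\alpha)\Phibar(\beta-c\sqrt\alpha)$ and $\tau=(1-\alpha)\Phibar(\beta)$: at the optimal $\alpha=0.3728$, $\beta=0.72$ (note $\beta>\tfrac12c\sqrt\alpha$, outside your range) the survival rate is only about $0.61$ and $\rho/\sqrt\tau\approx 1.00003$. The constant $c_0$ is by definition the threshold for $\rho>\sqrt\tau$, so an analysis that insists on near-complete clique survival cannot reach it; to repair Phase~1 you must replace your invariant by $k_j\approx\rho^j k$, $n_j\approx\tau^j n$ (as in Lemmas \ref{sizeofS}--\ref{sizeofnextiteration}) and use $\rho/\sqrt\tau>1$ as the growth condition.

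A secondary problem is your Phase~3 claim that $V(G')=K$ exactly, on the grounds that a non-clique vertex is adjacent to all of $\tilde K$ with probability $(\tfrac12+o(1))^{k_t}$ because its edges to $\tilde K$ are ``fair and independent'' or ``only mildly biased''. Since $\tilde K$ is built from the same graph, and for any $u$ that was sampled into some $S_j$ the edges from $u$ to the surviving vertices (hence to $\tilde K$) were examined and conditioned on by the thresholding, this independence assertion needs a genuine argument. The paper avoids the dependence by a union bound over all $s$-subsets of $K$ (\lemref{findKgivensubset}), which only gives $|V(G')|\le k+l_0$ with $l_0=o(k)$, and then compares degrees inside $G'$; your step could be patched the same way (apply that lemma to a $(\log n+1)$-vertex subset of $\tilde K$), but as written it is unjustified. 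The rest of your outline --- the re-randomization argument showing $G_{i+1}$ is again a planted-clique graph, the $\Theta(\log n)$ iteration count, and Phase~2 via top degrees plus the $\tfrac{3k_t}4$ threshold --- does match the paper.
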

Numerical calculations show that $c_0$ is close to $1.65$. For a
mathematical definition of $c_0$ see \defref{c0}. A refinement of the
algorithm that works with high probability for all $c \geq 1.261$ is
presented in \secref{variation}.

\subsection{Related Work}
Since \cite{AlonKrivelevichSudakov98}, there have been many papers
describing algorithms that solve various variants of the hidden clique
problem. In \cite{FeigeKrauthgamer00} an algorithm for finding hidden
cliques of size $\Omega(\sqrt{n})$ based on the Lov\'{a}sz theta
function is given, that has two advantages. The first is being able to
find the clique also in a semi-random hidden clique model, in which an
adversary can remove edges that are not in the clique, and the second
is being able to certify the optimality of its solution by providing
an upper bound on the size of the maximum clique in the graph.

McSherry \cite{McSherry01} gives an algorithm that solves the more
general problem of finding a planted partition. In the random graph
model described there, we are given a graph where the vertices are
randomly partitioned into $m$ classes, and between every pair of
vertices where one is in class $i$ and the other in class $j$ there is
an edge with probability $p_{ij}$. With the appropriate parameters,
this model can be reduced both to the hidden clique model and to the
hidden dense graph model that we describe in
\secref{hiddendensegraph}. For both these cases, the result is a
polynomial time algorithm that finds the hidden clique (dense graph)
with high probability for $k=c \sqrt{n}$.

Several attempts have been made to develop polynomial time algorithms
for finding hidden cliques of size $k = o(\sqrt{n})$, so far with no
success. For example, Jerrum \cite{Jerrum92} described the Metropolis
process and proved that it cannot find the clique when $k =
o(\sqrt{n})$. Feige and Krauthgamer \cite{FeigeKrauthgamer03} explain
why the algorithm described in \cite{FeigeKrauthgamer00} fails when $k
= o(\sqrt{n})$. Frieze and Kannan \cite{FriezeKannan08} give an
algorithm to find a hidden clique of size $k = \Omega \left( n^{1/3}
\log^4 n \right)$, however, the algorithm maximizes a certain cubic
form, and there are no known polynomial time algorithms for maximizing
cubic forms. In \secref{oracle} we give an algorithm that finds the
hidden clique when we are given a small part of it by an oracle or an
adversary. We prove, that for any $k = \omega (\log n \log \log n)$,
knowing only $\log n + 1$ vertices of the hidden clique enables us to
find the rest of them with high probability. For smaller $k$'s, $\log
n + 1$ is not enough, but $(1 + \varepsilon) \log n$ is.

There are many problems in different fields of computer science that
are related to the hidden clique problem. Among others, there are
connections to cryptography, testing and game theory. For connections
to cryptography, see for example \cite{Kucera91} where an encryption
scheme based on hiding an independent set in a graph is described or
\cite{JuelsPeinado00} where the function whose input is a graph $G$
and a set $K$ of $k$ vertices and whose output is $G$ with a clique on
$K$ is proposed as a one way function for certain values of $k$. For
connections to testing, see \cite{AlonAKMRX07} where Alon et al. prove
that if there is no polynomial time algorithm to find hidden cliques
of size $t > \log^3 n$ then there is no polynomial time algorithm that
can test $k$-wise independence of a distribution even when given a
polynomial number of samples from it, for $k = \Theta (\log n)$. For
connections to game theory, see \cite{HazanKrauthgamer09}, where Hazan
and Krauthgamer prove that if there is a polynomial time algorithm
that finds a Nash equilibrium of a two player game whose
social-welfare is close to the maximum, then there is a randomized
polynomial time algorithm that finds the hidden clique for $k = O(\log
n)$. The hidden clique model is also related to the planted-SAT model
\cite{BenSassonBiluGutfreund02,KrivelevichVilenchik06} and some models
in computational biology \cite{BenDorShamirYakhini99}.

\section{Proof of \thmref{thetheorem}}
Throughout the paper we use the following notations.
\begin{notation}
Given a graph $G = (V,E)$, for every $v \in V$ and $S \subseteq V$ we
denote by $d_S(v)$ the number of neighbors $v$ has in $S$. Formally,
\[
d_S(v) = \left| \left\{ u \in S : (u,v) \in E \right\} \right|~.
\]
We abbreviate $d_V(v)$ by $d(v)$.
\end{notation}
\begin{notation}
Let $\varphi(x)$ denote the Gaussian probability density function
$\varphi(x) = \frac1{\sqrt{2 \pi}} \mathrm{e}^{-x^2/2}$. We denote by
$\Phi(x)$ the Gaussian cumulative distribution function $\Phi(x) =
\int_{-\infty}^x \varphi(t) dt$, and $\Phibar (x) = 1 - \Phi(x)$.
\end{notation}
\begin{notation}
All logarithms in the paper are base $2$.
\end{notation}
\begin{notation}
We use the shorthand ``whp($f(n)$)'' to mean: ``with probability at
least $1 - f(n)$''.
\end{notation}
\begin{definition}
Given $0 < \alpha < 1$ and $\beta > 0$, we define
\[
\tau = (1 - \alpha) \Phibar(\beta)
\]
and
\[
\rho = (1 - \alpha) \Phibar( \beta - c \sqrt{\alpha})~.
\]
\end{definition}
\begin{definition}
\label{c0}
For every $\alpha, \beta$, denote the minimal $c$ for which $\rho \ge
\sqrt{\tau}$ by $\tilde{c}(\alpha, \beta)$. Define $c_0$ as the
infimum of $\tilde{c}(\alpha, \beta)$ for $0 < \alpha < 1$ and $\beta
> 0$.
\end{definition}
\begin{definition}
Define $n = n_0, n_1, n_2, \ldots$ and $k = k_0, k_1, k_2, \ldots$ by
$n_i = \tau^i n$ and $k_i = \rho^i k$. Define also $n = \tilde{n}_0,
\tilde{n}_1, \ldots$ and $k = \tilde{k}_0, \tilde{k}_1,\ldots$ to be
the actual sizes of $G_i$ and the hidden clique in $G_i$ respectively
when running the algorithm.
\end{definition}

\subsection{Proving the correctness of the algorithm}
In order to prove the correctness of the algorithm, we examine each of
the three phases of the algorithm. First, we prove that in every
iteration, with high probability $\tilde{n}_i, \tilde{k}_i$ are close
to $n_i, k_i$ respectively. We do this by first proving that in every
iteration the graph $G_i$ is a copy of $G(\tilde{n}_i, \tfrac12,
\tilde{k}_i)$, and therefore it is enough to prove that given a graph
in $G(n, \tfrac12, k)$, with high probability $\big| \tilde{V}_0
\big|$ is close to $\tau n$ and $\big| \tilde{V}_0 \cap K \big|$ is
close to $\rho k$. Here, the high probability should be high enough to
remain high even after $t$ iterations. Next, we prove that with high
probability $\tilde{K}$ is a subset of the hidden clique. Last, we
prove that with high probability $K^*$ is the hidden clique.

\subsubsection{Proving the correctness of the first phase of the algorithm}
\begin{lemma}
\label{nextiterationisrandom}
For every $i \geq 0$, the graph $G_i$ defined in the $i$'th iteration
of the algorithm is a copy of $G(\tilde{n}_i, \tfrac12, \tilde{k}_i)$.
\end{lemma}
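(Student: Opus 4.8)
The plan is to prove \lemref{nextiterationisrandom} by induction on $i$, with the base case $i=0$ being the trivial observation that $G_0 = G$ is by hypothesis a copy of $G(n,\tfrac12,k) = G(\tilde n_0, \tfrac12, \tilde k_0)$. For the inductive step, I would condition on the entire history of the algorithm up through the construction of $G_i$ — in particular on $G_i$ itself, on the identity of its hidden clique $K_i$ (of size $\tilde k_i$), on the random set $S_i \subseteq V(G_i)$, and on all edges of $G_i$ incident to $S_i$. Under this conditioning, $\tilde V_i$ is a fixed (deterministic) subset of $V(G_i)\setminus S_i$, because membership in $\tilde V_i$ depends only on $d_{S_i}(v)$, which is determined by the revealed edges. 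The key point is then that the edges of $G_i$ \emph{not} incident to $S_i$ have not been examined at all, so conditionally they retain their original joint law: each pair of non-clique vertices is still an independent fair coin flip, and each pair with at least one endpoint outside the clique is too, while pairs inside $K_i$ are still forced to be edges.

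The second ingredient is to check that the hidden clique of $G_{i+1}$ is exactly $K_i \cap \tilde V_i$ and that, conditionally, its identity is uniformly distributed among subsets of $\tilde V_i$ of the appropriate size — i.e.\ that $G_{i+1}$ is genuinely a fresh sample of $G(\tilde n_{i+1}, \tfrac12, \tilde k_{i+1})$ and not merely a graph with the right edge statistics. For this I would use a symmetry/exchangeability argument: since in $G_i$ the clique $K_i$ is a uniformly random $\tilde k_i$-subset of $V(G_i)$, and the rule defining $\tilde V_i$ treats all vertices symmetrically (it only looks at degrees into $S_i$, and the edge distribution is vertex-symmetric), one can argue that conditioned on $|\tilde V_i|$ and on $|\tilde V_i \cap K_i| = \tilde k_{i+1}$, the set $\tilde V_i \cap K_i$ is a uniformly random $\tilde k_{i+1}$-subset of $\tilde V_i$. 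Combined with the edge-law statement from the previous paragraph, this gives that $G_{i+1}$ — the induced subgraph on $\tilde V_i$ — has precisely the law $G(\tilde n_{i+1}, \tfrac12, \tilde k_{i+1})$, completing the induction.

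The step I expect to require the most care is making the conditioning rigorous: one must be explicit about which $\sigma$-algebra is being conditioned on and verify that $\tilde V_i$ is measurable with respect to it while the ``interior'' edges (both sides outside $S_i$) are independent of it. A subtle point is that the vertices of $\tilde V_i$ are selected using information about edges to $S_i$, so one might worry this biases the interior edges; the resolution is that the selection criterion $d_{S_i}(v) \ge \tfrac12|S_i| + \beta\tfrac{\sqrt{|S_i|}}{2}$ involves \emph{only} edges from $v$ to $S_i$ and not any edge lying entirely inside $V(G_i)\setminus S_i$, so the two edge-sets are disjoint and hence (by independence in $G(n,\tfrac12,k)$ conditioned on the clique) conditionally independent given the clique location. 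I would also note that although the sizes $\tilde n_{i+1}$ and $\tilde k_{i+1}$ are themselves random, the lemma is a statement \emph{conditional} on their realized values, which is exactly what is needed for the later concentration arguments that track $\tilde n_i, \tilde k_i$ against the deterministic targets $n_i = \tau^i n$, $k_i = \rho^i k$.
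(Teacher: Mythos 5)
Your proposal is correct and is essentially the paper's own argument: both rely on the deferred-decision observation that $\tilde V_i$ is determined by the clique identity, the set $S_i$, and the edges crossing between $S_i$ and $V(G_i)\setminus S_i$, so the edges inside $V(G_i)\setminus S_i$ are still unexposed and retain the planted-clique law, giving the induction step. Your extra exchangeability step (uniformity of the clique location within $\tilde V_i$) and the explicit conditioning are just a more careful write-up of what the paper leaves implicit, not a different route.
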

\begin{proof}
We prove this by induction. Assume that $G_i$ is a copy of
$G(\tilde{n}_i,\tfrac12,\tilde{k}_i)$. Consider the following
equivalent way of generating $G(\tilde{n}_i,\tfrac12,\tilde{k}_i)$:
First, pick the $\tilde{k}_i$ hidden clique vertices. Then pick the
set $S_i$. Then pick all the edges between $V(G_i) \setminus S_i$ and
$S_i$. At this point, we still need to pick the edges in $S_i$ and in
$V(G_i) \setminus S_i$, but we already have enough information to find
$\tilde{V}_i$, which is the vertex set of $G_{i+1}$. Since we can find
the vertices of $G_{i+1}$ before exposing any of the edges in it, it
is a copy of $G(\tilde{n}_{i+1},\tfrac12,\tilde{k}_{i+1})$.
\end{proof}

\begin{lemma}
\label{sizeofS}
For every $0 < \varepsilon_1 < \tfrac12$ and $0 < \varepsilon_2 <
\tfrac12$, the set $S_0$ satisfies $\big| |S_0| - \alpha n \big| \leq
O(n^{1 - \varepsilon_1})$ and $\big| |S_0 \cap K| - \alpha k \big|
\leq O(k^{1 - \varepsilon_2})$ whp($\mathrm{e}^{- \Theta (n^{1 - 2
    \varepsilon_1})} + \mathrm{e}^{-\Theta(k^{1 - 2
    \varepsilon_2})}$).
\end{lemma}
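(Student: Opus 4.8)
The plan is to apply a standard Chernoff–Hoeffding concentration bound to each of the two quantities $|S_0|$ and $|S_0 \cap K|$, which are each sums of independent Bernoulli random variables. Recall that $S_0$ is obtained by including each vertex of $V(G_0) = V(G)$ independently with probability $\alpha$. Hence $|S_0| = \sum_{v \in V} X_v$ where the $X_v$ are i.i.d.\ $\mathrm{Bernoulli}(\alpha)$, so $\E|S_0| = \alpha n$. Similarly, since the hidden clique $K$ has exactly $k$ vertices, $|S_0 \cap K| = \sum_{v \in K} X_v$ is a sum of $k$ i.i.d.\ $\mathrm{Bernoulli}(\alpha)$ variables, independent of the choices outside $K$, with $\E|S_0 \cap K| = \alpha k$.

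First I would fix $0 < \varepsilon_1 < \tfrac12$ and set the deviation $\lambda_1 = n^{1-\varepsilon_1}$. By the additive Chernoff bound (Hoeffding's inequality) applied to the $n$-term sum $|S_0|$,
\[
\pr\big( \big| |S_0| - \alpha n \big| \geq \lambda_1 \big) \leq 2\exp\!\big( -2\lambda_1^2 / n \big) = 2\exp\!\big( -2 n^{1 - 2\varepsilon_1} \big) = \mathrm{e}^{-\Theta(n^{1-2\varepsilon_1})}.
\]
Symmetrically, with $\lambda_2 = k^{1-\varepsilon_2}$ and the $k$-term sum $|S_0 \cap K|$,
\[
\pr\big( \big| |S_0 \cap K| - \alpha k \big| \geq \lambda_2 \big) \leq 2\exp\!\big( -2\lambda_2^2 / k \big) = 2\exp\!\big( -2 k^{1 - 2\varepsilon_2} \big) = \mathrm{e}^{-\Theta(k^{1-2\varepsilon_2})}.
\]
A union bound over these two events gives that both estimates hold simultaneously with probability at least $1 - \mathrm{e}^{-\Theta(n^{1-2\varepsilon_1})} - \mathrm{e}^{-\Theta(k^{1-2\varepsilon_2})}$, which is exactly the claimed whp bound; on this event $\big| |S_0| - \alpha n\big| \leq \lambda_1 = O(n^{1-\varepsilon_1})$ and $\big| |S_0 \cap K| - \alpha k\big| \leq \lambda_2 = O(k^{1-\varepsilon_2})$.

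There is essentially no obstacle here: the only mild subtlety is bookkeeping, namely making sure that the concentration radius one uses ($n^{1-\varepsilon_1}$, not $n^{1/2}\log n$ or the like) is large enough to make the failure probability of the stated sub-exponential form while still being $o(\alpha n)$ so that the bound is informative in later iterations. Since $1 - \varepsilon_1 > \tfrac12$, the exponent $1 - 2\varepsilon_1 > 0$, so the failure probabilities are genuinely (stretched-)exponentially small, as needed for the union bound over the $t = \Theta(\log n)$ iterations carried out later in the argument. If one prefers to avoid Hoeffding and use the multiplicative Chernoff bound instead, the same conclusion follows after noting $\lambda_1 / (\alpha n) = n^{-\varepsilon_1}/\alpha \to 0$, which puts us in the small-deviation regime where the bound is again $\exp(-\Theta(\lambda_1^2/n))$.
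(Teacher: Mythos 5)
Your proposal is correct and matches the paper's proof exactly: the paper likewise invokes Hoeffding's inequality (Theorem~\ref{hoeffding}) with deviations $t = n^{1-\varepsilon_1}$ for $|S_0|$ and $t = k^{1-\varepsilon_2}$ for $|S_0\cap K|$, and your version simply spells out the Bernoulli decomposition and the union bound that the paper leaves implicit.
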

\begin{proof}
Follows directly from \thmref{hoeffding}, by setting $t = n^{1 -
  \varepsilon_1}$ for the bound on $|S_0|$ and $t = k^{1 -
  \varepsilon_2}$ for the bound on $|S_0 \cap K|$.
\end{proof}

\begin{lemma}
\label{sizeofnextiteration}
For every $0 < \varepsilon_1 < \tfrac12$ and $0 < \varepsilon_2 <
\tfrac12$, the set $\tilde{V}_0$ satisfies $ \big| | \tilde{V}_0 | -
\tau n \big| \leq O(n^{1 - \varepsilon_1})$ and $ \left| | \tilde{V}
\cap K | - \rho k \right| \leq O(k^{1 - \varepsilon_2})$
whp($\mathrm{e}^{- \Theta (n^{1 - 2 \varepsilon_1})} +
\mathrm{e}^{-\Theta(k^{1 - 2 \varepsilon_2})}$).
\end{lemma}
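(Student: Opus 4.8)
The plan is to condition on the set $S_0$ and on the edges incident to $S_0$ being the only randomness that matters, and then express $|\tilde V_0|$ and $|\tilde V_0\cap K|$ as sums of independent indicator random variables to which a Hoeffding/Chernoff bound (Theorem~\ref{hoeffding}) applies. First I would fix a realization of $S_0$ satisfying the conclusions of Lemma~\ref{sizeofS}, i.e. $\big||S_0|-\alpha n\big|\le O(n^{1-\varepsilon_1})$ and $\big||S_0\cap K|-\alpha k\big|\le O(k^{1-\varepsilon_2})$; this event fails with probability at most $\mathrm{e}^{-\Theta(n^{1-2\varepsilon_1})}+\mathrm{e}^{-\Theta(k^{1-2\varepsilon_2})}$, which is absorbed into the claimed bound. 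Write $s=|S_0|$. For a non-clique vertex $v\in V(G)\setminus(S_0\cup K)$, the count $d_{S_0}(v)$ is $\mathrm{Bin}(s,\tfrac12)$, so $v\in\tilde V_0$ with probability $p:=\pr\big[\mathrm{Bin}(s,\tfrac12)\ge \tfrac{s}{2}+\beta\tfrac{\sqrt s}{2}\big]$; for a clique vertex $v\in K\setminus S_0$, the edges from $v$ to $S_0\cap K$ are present with certainty and the remaining $s-|S_0\cap K|$ edges are fair coins, so $v\in\tilde V_0$ with probability $q:=\pr\big[|S_0\cap K|+\mathrm{Bin}(s-|S_0\cap K|,\tfrac12)\ge\tfrac{s}{2}+\beta\tfrac{\sqrt s}{2}\big]$. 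Crucially, conditioned on $S_0$ these events are mutually independent across vertices $v$, since they depend on disjoint edge sets.

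Next I would estimate $p$ and $q$ via the central limit theorem with a Berry--Esseen error term. Since $s=\alpha n(1+o(1))$ with $n\to\infty$, we have $p=\Phibar(\beta)+O(s^{-1/2})=\Phibar(\beta)+O(n^{-1/2})$. For $q$, the threshold exceedance is $|S_0\cap K|+\mathrm{Bin}(s-|S_0\cap K|,\tfrac12)$, whose mean is $\tfrac{s}{2}+\tfrac{|S_0\cap K|}{2}$ and whose standard deviation is $\tfrac12\sqrt{s-|S_0\cap K|}$; using $|S_0\cap K|=\alpha k(1+o(1))=\alpha c\sqrt n(1+o(1))$ and $s=\alpha n(1+o(1))$, the standardized threshold is $\big(\beta\tfrac{\sqrt s}{2}-\tfrac{|S_0\cap K|}{2}\big)/\big(\tfrac12\sqrt{s-|S_0\cap K|}\big)=\beta-c\sqrt\alpha+o(1)$, so $q=\Phibar(\beta-c\sqrt\alpha)+o(1)$. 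Then $\E\big[|\tilde V_0|\,\big|\,S_0\big]=p\,\big(n-s-(\tilde k_0-|S_0\cap K|)\big)+q\,(\tilde k_0-|S_0\cap K|)=(1-\alpha)\Phibar(\beta)\,n+o(n)=\tau n+o(n)$, and similarly $\E\big[|\tilde V_0\cap K|\,\big|\,S_0\big]=q\,(\tilde k_0-|S_0\cap K|)=(1-\alpha)\Phibar(\beta-c\sqrt\alpha)\,k+o(k)=\rho k+o(k)$.

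Having pinned the conditional means, I would apply Theorem~\ref{hoeffding} to the sums of independent indicators: with $t=n^{1-\varepsilon_1}$, $\big||\tilde V_0|-\E[|\tilde V_0|\mid S_0]\big|\le O(n^{1-\varepsilon_1})$ fails with probability at most $\mathrm{e}^{-\Theta(n^{1-2\varepsilon_1})}$, and with $t=k^{1-\varepsilon_2}$, $\big||\tilde V_0\cap K|-\E[|\tilde V_0\cap K|\mid S_0]\big|\le O(k^{1-\varepsilon_2})$ fails with probability at most $\mathrm{e}^{-\Theta(k^{1-2\varepsilon_2})}$. Combining the concentration around the conditional mean with the estimate of the conditional mean, and adding back the failure probability of the event from Lemma~\ref{sizeofS}, gives the stated bounds whp($\mathrm{e}^{-\Theta(n^{1-2\varepsilon_1})}+\mathrm{e}^{-\Theta(k^{1-2\varepsilon_2})}$). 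I expect the main subtlety to be bookkeeping the error terms so that the $o(n)$ and $o(k)$ slack in the conditional mean is genuinely of order $n^{1-\varepsilon_1}$ and $k^{1-\varepsilon_2}$ respectively — in particular, tracking how the $O(n^{1-\varepsilon_1})$ uncertainty in $s$ and the $O(k^{1-\varepsilon_2})$ uncertainty in $|S_0\cap K|$ propagate through $p$, $q$ and the Gaussian approximation, since $\varepsilon_1,\varepsilon_2$ must be chosen small enough (relative to $1/2$) that these dominate the $O(n^{-1/2})$ Berry--Esseen terms.
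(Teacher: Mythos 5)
Your proposal is correct and follows essentially the same route as the paper: condition on $S_0$ (via Lemma~\ref{sizeofS}), note that for a clique vertex the event $d_{S_0}(v)\ge\tfrac12|S_0|+\beta\tfrac{\sqrt{|S_0|}}2$ reduces to a threshold on the Binomial count over $S_0\setminus K$ (the paper's $\rho'$ reformulation), estimate the exceedance probabilities by Berry--Esseen, and apply Hoeffding to the sums of independent indicators — which is exactly what the paper packages into Corollary~\ref{numberabovethresholdindependent}. The error-tracking subtlety you flag at the end (making the conditional-mean slack genuinely $O(n^{1-\varepsilon_1})$ and $O(k^{1-\varepsilon_2})$, including the $\sqrt{n/(n-k)}$ correction) is handled in the paper by the exact threshold equivalence and the observation that $|\rho-\rho'|=O(1/\sqrt n)$, so it closes as you expect.
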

\begin{proof}
Assume that the events $|S_0| = (1+o(1))\alpha n$ and $|S_0 \cap K| =
(1 + o(1)) \alpha k$ both occur. By \lemref{sizeofS} this happens with
high probability. We can now apply
\corref{numberabovethresholdindependent} twice.

For the vertices in $\left( V \setminus S_0 \right) \setminus K$, the
result follows directly from \corref{numberabovethresholdindependent}
by setting $\varepsilon = \varepsilon_1$.  For $v \in \left( V
\setminus S_0 \right) \cap K$, having $d_{S_0}(v) \geq \tfrac12 \alpha
n + \beta \tfrac{\sqrt{\alpha n}}2$ is equivalent to having
\[
d_{S_0
  \setminus K}(v) \geq \tfrac12 \alpha (n - k) + \tfrac12 \big( \beta
- c \sqrt{\alpha} \big)\sqrt{\tfrac{n}{n-k}} \sqrt{\alpha (n -
  k)}~.
\]
So setting $\varepsilon = \varepsilon_2$ in
\corref{numberabovethresholdindependent}, gives that
\[
\pr \big( \big| |\tilde{V}_0 \cap K| - \rho'k \big| \leq O(k^{1 -
  \varepsilon_2}) \big) \geq 1 - \mathrm{e}^{-\Theta(k^{1 - 2
    \varepsilon_2})}~,
\]
where $\rho' = (1-\alpha) \Phibar \big( (\beta - c \sqrt{\alpha})
\sqrt{\tfrac{n}{n-k}} \big)$. But the difference between $\rho$ and
$\rho'$ is of order $\tfrac1{\sqrt{n}}$, which means that the result
holds for $\big| | \tilde{V}_0 \cap K | - \rho k \big|$ as well.
\end{proof}

\begin{remark}
In order to get a success probability that tends to $1$, we need to
bound the sum of the probabilities of failing in each iteration by
$o(1)$. We refer the reader to \secref{failureprobanalysis} for a
detailed analysis of the failure probability of the algorithm.
\end{remark}

\subsubsection{Proving the correctness of the second phase of the algorithm}
We start by bounding the probability that a hidden clique of size $k$
contains the $k$ largest degree vertices in the graph.
\begin{lemma}
\label{probcliqueverticesaremaximal}
Let $G \in G(n,\tfrac12,k)$. Then whp($\mathrm{e}^{-(k^2 / 8 n - \log
  n - O(1))}$), the clique vertices are the $k$ largest degree
vertices in the graph. Formally, if we denote the hidden clique by
$K$, and the set of $k$ largest degree vertices by $M$, then
\[
\pr \big( \big| K \setminus M \big| > 0 \big) \leq
\mathrm{e}^{-(k^2/8n - \log n - O(1))}~.
\]
\end{lemma}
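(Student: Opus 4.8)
The plan is to pick a degree threshold $T$ lying strictly between the typical degree of a vertex outside the clique and the typical degree of a clique vertex, and to show, via Hoeffding's inequality and a union bound, that with the claimed probability every clique vertex has degree above $T$ while every non-clique vertex has degree below $T$; this forces the $k$ largest-degree vertices to be exactly $K$.

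First I would record the two relevant degree distributions. For $u \in V \setminus K$, all $n-1$ edges at $u$ are independent coin flips, so $d(u) \sim \mathrm{Bin}(n-1,\tfrac12)$, with mean $\mu_0 = \tfrac{n-1}2$. For $v \in K$, the $k-1$ edges from $v$ to the rest of $K$ are present deterministically while the other $n-k$ edges are independent coin flips, so $d(v) = (k-1) + \mathrm{Bin}(n-k,\tfrac12)$, with mean $\mu_1 = \tfrac{n-1}2 + \tfrac{k-1}2$. The gap $\mu_1 - \mu_0 = \tfrac{k-1}2$ is what the whole argument exploits, and I would take $T = \tfrac{n-1}2 + \tfrac{k-1}4$, so that $T - \mu_0 = \mu_1 - T = \tfrac{k-1}4$.

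Next comes the containment of events: if $d(v) > T$ for every $v \in K$ and $d(u) < T$ for every $u \notin K$, then the set $M$ of $k$ highest-degree vertices is exactly $K$, hence $\{|K \setminus M| > 0\} \subseteq \{\exists v \in K : d(v) \le T\} \cup \{\exists u \notin K : d(u) \ge T\}$. Applying \thmref{hoeffding} to each fixed vertex gives $\pr(d(u) \ge T) \le \mathrm{e}^{-(k-1)^2/(8(n-1))}$ for $u \notin K$ and $\pr(d(v) \le T) \le \mathrm{e}^{-(k-1)^2/(8(n-k))}$ for $v \in K$, and a union bound over the at most $n$ vertices of each type yields $\pr(|K\setminus M| > 0) \le n\,\mathrm{e}^{-(k-1)^2/(8(n-1))} + n\,\mathrm{e}^{-(k-1)^2/(8(n-k))}$. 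To finish I would simplify the exponents: assuming $2 \le k < n$ (the cases $k \le 1$ and $k = n$ being trivial, the former because the bound then exceeds $1$, the latter because $G$ is complete), both $\tfrac{(k-1)^2}{n-1}$ and $\tfrac{(k-1)^2}{n-k}$ exceed $\tfrac{k^2-2k}{n} \ge \tfrac{k^2}{n} - 2$, so each term is at most $n\,\mathrm{e}^{-(k^2/8n - O(1))}$; summing, and absorbing the factor $n = \mathrm{e}^{\ln n} \le \mathrm{e}^{\log n}$ (logarithms being base $2$) into the exponent, gives $\pr(|K \setminus M| > 0) \le \mathrm{e}^{-(k^2/8n - \log n - O(1))}$.

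There is no genuine obstacle here; the only points demanding care are placing $T$ at the midpoint so that the two tail bounds balance and each produces an exponent close to $k^2/(8n)$, and remembering that a clique vertex's degree is a binomial on only $n-k$ trials shifted up by $k-1$, rather than a binomial on $n-1$ trials.
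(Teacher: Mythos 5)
Your proposal is correct and follows essentially the same route as the paper: place a threshold midway between the typical non-clique degree and the typical clique degree (the paper uses $\tfrac12 n + \tfrac14 k$), apply Hoeffding's inequality to each vertex, and take a union bound, absorbing the factor $n$ into the $\log n$ term of the exponent. Your extra care with the exact degree laws ($n-1$ versus $n-k$ trials and the shift by $k-1$) only changes constants hidden in the $O(1)$.
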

\begin{proof}
Define $x = \tfrac14 k$. Then by \thmref{hoeffding}
\[
\pr \big( \exists v \not\in K : d(v) \geq \tfrac12 n + x \big) \leq n
\pr \big( B \big( n,\tfrac12 \big) \geq \tfrac12 n + x \big) \leq n
\pr \big( \big| B \big(n, \tfrac12 \big) - \tfrac12 n \big| \geq x
\big) \leq 2 n \mathrm{e}^{-k^2 / 8n}~.
\]
On the other hand,
\begin{eqnarray*}
\pr \big( \exists v \in K : d(v) < \tfrac12 n + x \big) & \leq & k \pr
\big( B \big( n - k,\tfrac12 \big) < \tfrac12 (n-k) + x - \tfrac12 k
\big) \\ & \leq & k \pr \big( \big| B \big(n-k, \tfrac12 \big) -
\tfrac12 (n-k) \big| \geq x \big) \leq 2 k \mathrm{e}^{-k^2 / 8n}~.
\end{eqnarray*}
Therefore, the probability that there exist a non-clique vertex $v$
and a clique vertex $u$ such that $d(u) < d(v)$ is bounded by $2 (n+k)
\mathrm{e}^{-k^2/8n}$.
\end{proof}

\begin{cor}
\label{failureprobabilitytildeK}
If the algorithm does $t$ iterations before finding $\tilde{K}$ and
succeeds in every iteration, then whp($\mathrm{e}^{-\Theta ( (
  \frac{\rho^2}{\tau} )^t )}$), $\tilde{K}$ is a subset of the
original hidden clique.
\end{cor}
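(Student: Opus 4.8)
The plan is to reduce the claim to \lemref{probcliqueverticesaremaximal} applied inside $G_t$, followed by a single Hoeffding estimate. By hypothesis the algorithm performs $t$ iterations and succeeds in each of them, so by \lemref{nextiterationisrandom} and the $t$-fold application of \lemref{sizeofnextiteration}, $G_t$ is a copy of $G(\tilde{n}_t,\tfrac12,\tilde{k}_t)$ with $\tilde{n}_t=(1+o(1))\tau^t n$ and $\tilde{k}_t=(1+o(1))\rho^t k$. Write $\tilde{K}_t:=K\cap V(G_t)$ for the hidden clique inside $G_t$ (it has $\tilde{k}_t$ vertices); since $\tilde{K}\subseteq V(G_t)$, proving ``$\tilde{K}\subseteq K$'' is the same as proving ``$\tilde{K}\subseteq\tilde{K}_t$''.

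First I would control how far $K'$ --- the set of $k_t$ highest-degree vertices of $G_t$ that the algorithm actually uses --- can be from $\tilde{K}_t$. Let $\mathcal{A}$ be the event that the $\tilde{k}_t$ highest-degree vertices of $G_t$ are exactly $\tilde{K}_t$. Applying \lemref{probcliqueverticesaremaximal} to $G_t$ gives $\pr(\mathcal{A}^c)\le\mathrm{e}^{-(\tilde{k}_t^2/8\tilde{n}_t-\log\tilde{n}_t-O(1))}$; since $k_t^2/n_t=c^2(\rho^2/\tau)^t$ (using $k=c\sqrt{n}$) and, for the algorithm's choice of $t$, the $\log\tilde{n}_t$ term is negligible next to $\tilde{k}_t^2/\tilde{n}_t$ (see \secref{failureprobanalysis}), this is $\mathrm{e}^{-\Theta((\rho^2/\tau)^t)}$. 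On $\mathcal{A}$, the sets $K'$ and $\tilde{K}_t$ are, respectively, the top $k_t$ and the top $\tilde{k}_t$ vertices of one and the same degree ordering, so $|K'\setminus\tilde{K}_t|\le|k_t-\tilde{k}_t|=o(k_t)$, and therefore $d_{K'}(v)\le d_{\tilde{K}_t}(v)+o(k_t)$ for every vertex $v$.

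Consequently, $\tilde{K}\not\subseteq\tilde{K}_t$ forces some non-clique vertex $v\in V(G_t)\setminus\tilde{K}_t$ to have $d_{K'}(v)\ge\tfrac34 k_t$, which on $\mathcal{A}$ implies $d_{\tilde{K}_t}(v)\ge\tfrac34 k_t-o(k_t)$, so
\[
\pr\big(\tilde{K}\not\subseteq K\big)\ \le\ \pr(\mathcal{A}^c)\ +\ \pr\Big(\exists\,v\in V(G_t)\setminus\tilde{K}_t:\ d_{\tilde{K}_t}(v)\ge\tfrac34 k_t-o(k_t)\Big)~.
\]
For the second term, by \lemref{nextiterationisrandom} the graph $G_t$ is a copy of $G(\tilde{n}_t,\tfrac12,\tilde{k}_t)$, so conditionally on $\tilde{K}_t$ each non-clique vertex $v$ has $d_{\tilde{K}_t}(v)\sim B(\tilde{k}_t,\tfrac12)$, with mean $(\tfrac12+o(1))k_t$; the event above is thus a deviation of $(\tfrac14-o(1))k_t$ above the mean, which by \thmref{hoeffding} has probability at most $\mathrm{e}^{-\Theta(k_t^2/\tilde{k}_t)}=\mathrm{e}^{-\Theta(k_t)}$, and a union bound over the at most $\tilde{n}_t$ non-clique vertices bounds the second term by $\tilde{n}_t\,\mathrm{e}^{-\Theta(k_t)}$. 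In the regime where this corollary is applied one has $n_t/k_t\to\infty$, hence $k_t\gg k_t^2/n_t$, so the second term is absorbed into $\pr(\mathcal{A}^c)$ and altogether $\pr(\tilde{K}\not\subseteq K)\le\mathrm{e}^{-\Theta((\rho^2/\tau)^t)}$.

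The only delicate point is the bookkeeping caused by $K'\neq\tilde{K}_t$: because the algorithm uses the guessed clique size $k_t$ instead of the true size $\tilde{k}_t$, the set $K'$ may contain up to $|k_t-\tilde{k}_t|$ non-clique vertices, and one has to check that this slack --- together with the conditioning on $\mathcal{A}$ that makes the reduction legitimate --- only moves the Hoeffding threshold by $o(k_t)$, which is harmless since the relevant gap is a constant fraction of $k_t$. Everything else is a routine concentration computation. (The same estimate also shows that, on $\mathcal{A}$, every vertex of $\tilde{K}_t$ has at least $|K'\cap\tilde{K}_t|-1=(1-o(1))k_t\ge\tfrac34 k_t$ neighbors in $K'$ and hence lies in $\tilde{K}$; this is what phase three needs, although it is not required for the present statement.)
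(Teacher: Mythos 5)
Your proof is correct and follows essentially the same route as the paper: apply \lemref{probcliqueverticesaremaximal} inside $G_t$ (which is a copy of $G(\tilde{n}_t,\tfrac12,\tilde{k}_t)$ by \lemref{nextiterationisrandom}) to identify $K'$ with the clique of $G_t$ up to $o(k_t)$ vertices, and then use \thmref{hoeffding} plus a union bound to show every non-clique vertex falls below the $\tfrac{3k_t}4$ threshold. The only difference is cosmetic: you make explicit, via the event $\mathcal{A}$ and the comparison $d_{K'}(v)\le d_{\tilde{K}_t}(v)+o(k_t)$, the step the paper states more briefly ("follows from \thmref{hoeffding} and the union bound"), and your final error term is absorbed into the same $\mathrm{e}^{-\Theta((\rho^2/\tau)^t)}$ bound.
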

\begin{proof}
The algorithm estimates $\tilde{k}_t$, the number of hidden clique
vertices in $G_t$, by $k_t = \rho^t k$. If the input graph has $n$
vertices and a hidden clique of size $k = c \sqrt{n}$, and all the
iterations are successful, then $|\tilde{k}_t - k_t| \leq O(k_t^{1 -
  \varepsilon_1})$. Recall that $K'$ is defined as the $k_t$ largest
degree vertices in $G_t$. By \lemref{probcliqueverticesaremaximal},
whp($\mathrm{e}^{-\Theta ( \frac{\rho^{2t}k^2}{\tau^t n} )}$) the
hidden clique vertices have the largest degrees in $G_t$, so if
$\tilde{k}_t < k_t$ then $K'$ contains all the hidden clique vertices
in $G_t$ plus $O(k_t^{1 - \varepsilon_1})$ non-clique vertices, and if
$\tilde{k}_t > k_t$, then $K'$ contains all the hidden clique vertices
in $G_t$ except for $O(k_t^{1 - \varepsilon_2})$ of them. In both
cases, every hidden clique vertex in $G_t$ has at least $k_t -
O(k_t^{1 - \varepsilon_2})$ neighbors in
$K'$. Whp($\mathrm{e}^{-\Theta ( \frac{\rho^{2t}k^2}{\tau^t n} )}$)
every non-clique vertex in $G_t$ has at most $\tfrac{2k_t}3$ neighbors
in $K'$ (this follows from \thmref{hoeffding} and the union
bound). Thus, if we define $\tilde{K} = \big\{ v \in V(G_t) :
d_{K'}(v) \geq \tfrac{3k_t}4 \big\}$, then whp($\mathrm{e}^{-\Theta (
  \frac{\rho^{2t}k^2}{\tau^t n} )}$), $\tilde{K}$ contains every
clique vertex in $G_t$, and no non-clique vertex in $G_t$.
\end{proof}

\subsubsection{Proving the correctness of the third phase of the algorithm}
In order to prove that $K^*$ is the hidden clique with high
probability, we prove a more general Lemma. We prove that if an
adversary reveals a subset of the clique that is not too small, we can
use it to find the whole clique.
\label{oracle}
\begin{lemma}[Finding hidden cliques from partial information]
\label{findKgivensubset}
We are given a random graph $G \in G(n,\tfrac12,k)$, and a subset of
the hidden clique $\tilde{K} \subseteq K$ of size $s$. Suppose that
either
\begin{enumerate}[(a)]
\item $k = O(\log n \log \log n)$ and $s \geq (1 + \varepsilon) \log
  n$ for some $\varepsilon > 0$, or
\item $k \geq \omega (\log n \log \log n)$ and $s \geq \log n + 1$.
\end{enumerate}
Let $G'$ denote the subgraph of $G$ induced by $\tilde{K}$ and all its
common neighbors, and define $K^*$ to be the $k$ largest degree
vertices of $G'$. Then for every $0 < \varepsilon_3 < \tfrac12$,
whp($\mathrm{e}^{-\Theta(s \log k + \log n)} +
\mathrm{e}^{-\Theta(k^{1 - 2 \varepsilon_3})}$), $K^* = K$.
\end{lemma}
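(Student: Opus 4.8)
The plan is to analyze the graph $G'$ and argue that, with high probability, $K^*$ coincides with $K$. First I would observe that since $\tilde K\subseteq K$, every vertex of $K$ is a common neighbor of $\tilde K$, so $K\subseteq V(G')$; thus $G'$ contains the hidden clique together with some extra ``survivor'' vertices from $V\setminus K$. A vertex $v\in V\setminus K$ survives into $G'$ exactly when it is adjacent to all $s$ vertices of $\tilde K$; since these $s$ edges are independent fair coin flips, $v$ survives with probability $2^{-s}$, so the expected number of non-clique survivors is $(n-s)2^{-s}\le n2^{-s}$. Under hypothesis (b), $s\ge\log n+1$ gives expectation at most $1/2$, and a Chernoff/union bound shows whp$(\mathrm{e}^{-\Theta(s\log k+\log n)})$ that the number of survivors is $O(k^{1-\varepsilon_3})$ or so — in any case $o(k)$ and much smaller than $k$; under hypothesis (a), $s\ge(1+\varepsilon)\log n$ gives expectation $\le n^{-\varepsilon}$, and the same kind of tail bound controls the count even when $k$ is only polylogarithmic. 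Let me write $G'\in$ a graph on $n'=k+m$ vertices where $m$ is the (small) number of non-clique survivors, and note that conditioned on $\tilde K$ and on which vertices survive, the edges of $G'$ not already forced are still independent fair coins, so $G'$ is essentially a copy of $G(n',\tfrac12,k)$ with $n'$ only slightly larger than $k$.

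Next I would apply a degree-comparison argument inside $G'$, in the spirit of \lemref{probcliqueverticesaremaximal} but now with the much more favorable ratio $k/n'$: every clique vertex has degree $n'-1$ inside $G'$ (it is adjacent to everything, being a common neighbor issue aside), while a surviving non-clique vertex $v$ has degree inside $G'$ equal to $1+(\text{number of other survivors adjacent to }v)+(\text{number of clique vertices adjacent to }v)$. The clique-adjacency count is a $B(k,\tfrac12)$ variable, which is at most $\tfrac k2+O(\sqrt{k\log k})$ simultaneously for all $m$ survivors whp (union bound over $m\le \mathrm{poly}(n)$ survivors, each failing with probability $\mathrm{e}^{-\Theta(\log k)}$ if we aim for a gap of $\Theta(\sqrt{k\log k})$), and the survivor-adjacency count is at most $m=o(k)$. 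Hence every non-clique survivor has degree at most roughly $\tfrac k2+o(k)<k-1\le n'-1-m$, strictly below the degree of every clique vertex, as long as the gap $k/2$ dominates all the error terms — which it does since $k\ge\omega(\log n\log\log n)$ in case (b) and since in case (a) the number of survivors is polylogarithmic while $k$ is at least $\Omega(\log n)$... wait, in case (a) $k=O(\log n\log\log n)$ could be comparable to the number of clique-edges fluctuations, so I'd need to be a little more careful there and use the stronger bound $s\ge(1+\varepsilon)\log n$ to force the number of survivors down to $O(1)$ or $n^{-\varepsilon}$ in expectation, so that whp there are no survivors at all or so few that the $\tfrac k2$ vs.\ $k$ gap still wins; in fact when $m=0$ the statement is immediate. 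Combining, whp the $k$ largest-degree vertices of $G'$ are exactly the $k$ clique vertices, i.e.\ $K^*=K$.

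Finally I would assemble the failure probabilities: the event that more than the allowed number of non-clique vertices survive contributes $\mathrm{e}^{-\Theta(s\log k+\log n)}$ (this is where the delicate case analysis between (a) and (b) enters — we want the survivor count small enough that the degree gap argument goes through, and the exponent $s\log k$ is what a Chernoff bound on $\mathrm{Bin}(n,2^{-s})$ gives when we demand the count be $\le$ some slowly-growing bound), and the event that some surviving non-clique vertex has anomalously high degree inside $G'$ contributes $\mathrm{e}^{-\Theta(k^{1-2\varepsilon_3})}$ via \thmref{hoeffding} with deviation parameter $k^{1-\varepsilon_3}$ and a union bound over at most $n$ vertices. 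The main obstacle, and the part requiring the most care, is the interplay in regime (a): there $k$ is small enough that the $B(k,\tfrac12)$ fluctuations of a survivor's clique-degree are not automatically negligible compared to the target gap, so the argument has to lean on $s$ being slightly more than $\log n$ to drive the number of survivors essentially to zero — one has to check that the two parts of the hypothesis are tuned to exactly cover the two complementary ways the degree-separation can fail. Everything else is a routine combination of Hoeffding's inequality, union bounds, and the observation (as in \lemref{nextiterationisrandom}) that the unexposed edges of $G'$ remain independent and uniform.
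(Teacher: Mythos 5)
There is a genuine gap, and it is in the very first step: you treat $\tilde K$ as a \emph{fixed} set, independent of the edges of $G$, so that each $v\notin K$ is adjacent to all of $\tilde K$ with probability exactly $2^{-s}$ and the survivor count is a $\mathrm{Bin}(n,2^{-s})$ variable. But the lemma is stated (and used) in a setting where $\tilde K$ may be revealed by an adversary who sees the graph, and in the algorithm's third phase $\tilde K$ is itself computed from the same graph, so it is correlated with the edges between $V\setminus K$ and $K$. For such a $\tilde K$ the edges from $v$ to $\tilde K$ are not fresh coins, and your conclusion that under hypothesis (a) there are essentially no survivors (expectation $n^{-\varepsilon}$) is simply false in the adversarial setting: the adversary can pick the $s$-subset of $K$ whose common outside neighborhood is largest. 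The paper handles this by a union bound over all $\binom{k}{s}\le k^s$ subsets of $K$, which is exactly where the $s\log k$ terms come from, and which forces the weaker but uniform guarantee of at most $l_0=\tfrac{2(\log n+s\log k)}{s-\log n}$ non-clique common neighbors; one then only needs $l_0=o(k)$, checked separately in regimes (a) and (b). Your attempt to recover the exponent $\mathrm{e}^{-\Theta(s\log k+\log n)}$ from a Chernoff bound on $\mathrm{Bin}(n,2^{-s})$ is reverse-engineering the stated probability rather than a derivation, and it cannot be repaired without the union bound over subsets (or some other argument decoupling $\tilde K$ from the edges).

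The same issue infects your second step: the claim that, conditioned on $\tilde K$ and on the identity of the survivors, the remaining edges of $G'$ are independent fair coins is valid only for a graph-independent $\tilde K$. The paper avoids any such conditioning by bounding, simultaneously for \emph{all} non-clique vertices of $G$, the number of neighbors in the whole clique $K$ by $\tfrac k2+k^{1-\varepsilon_3}$ (failure probability $\mathrm{e}^{-\Theta(k^{1-2\varepsilon_3})}$), and then comparing degrees in $G'$: a non-clique survivor has degree at most $\tfrac k2+k^{1-\varepsilon_3}+l_0<k-1$, while every clique vertex has degree at least $k-1$. Your degree-comparison is in the same spirit and would be fine once the survivor count is controlled correctly (though your intermediate bound with gap $\Theta(\sqrt{k\log k})$ and per-vertex failure $\mathrm{e}^{-\Theta(\log k)}$ is only polynomially small and should be replaced by the $k^{1-\varepsilon_3}$ deviation you use later). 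So the degree argument matches the paper, but the missing union bound over the $k^s$ possible sets $\tilde K$ is the key idea of this lemma, and without it the proof does not establish the statement as intended.
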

\begin{proof}


Consider an arbitrary subset of $K$ of size $s$. The probability that
its vertices have at least $l_0$ non-clique common neighbors can be
bounded by $\sum_{l=l_0}^{n-k} n^l 2^{-sl}$. Taking union bound over
all subsets of size $s$ of $K$ gives that the probability that there
exists a subset with at least $l_0$ non-clique common neighbors is
bounded by
\[
k^s \sum_{l=l_0}^{n-k} n^l 2^{-sl} = \sum_{l=l_0}^{n-k} 2^{s \log k +
  l (\log n - s)} \leq n 2^{s \log k + l_0 (\log n - s)}~.
\]
Therefore, this is also a bound on the probability that the set
$\tilde{K}$ has at least $l_0$ non-clique neighbors. So we have
\[
\pr \big( \big| V \big( G' \big) \big| \geq k + l_0 \big) \leq 2^{\log
  n + s \log k + l_0 (\log n - s)}~.
\]
By our assumptions on $s$, we know that $\log n - s$ is
negative. Therefore, we can take $l_0 = \tfrac{2 ( \log n + s \log
  k)}{s - \log n}$ and get that whp($2^{-s \log k - \log n}$), there
are at most $l_0$ non-clique vertices that are adjacent to all of
$\tilde{K}$. Recall that the probability that there exists a
non-clique vertex in $G$ with more than $\tfrac{k}2 + k^{1 -
  \varepsilon_3}$ neighbors in the hidden clique is bounded by
$\mathrm{e}^{-\Theta(k^{1 - 2 \varepsilon_3})}$. Therefore,
whp($\mathrm{e}^{-\Theta(k^{1 - 2 \varepsilon_3})}$), the degrees of
all the non-clique vertices in $G'$ are at most $\tfrac{k}2 + k^{1 -
  \varepsilon_3} + l_0$. If $s$ and $k$ are such that $l_0 = o(k)$,
this value is smaller than $k-1$. On the other hand, all the clique
vertices in $G'$ have degree at least $k - 1$, so the clique vertices
have the largest degrees in $G'$.

If $k = \omega ( \log n \log \log n)$ then letting $s = \log n + 1$
gives $l_0 = 2 \big( \log n + \log n \log k + \log k \big)$.  Clearly,
$\log n + \log k = o(k)$. To see that $\log n \log k = o(k)$, denote
$k = \log n f(n)$ where $f(n) = \omega (\log \log n)$. Then $\log n
\log k = \log n \big( \log \log n + \log \big( f(n) \big)
\big)$. Clearly, $\log n \log (f(n)) = o(\log n f(n))$, and from the
definition of $f(n)$ we also have $\log n \log \log n = o(\log n
f(n))$.

If $k \leq O(\log n \log \log n)$, then letting $s \geq (1 +
\varepsilon) \log n$ for some small $\varepsilon > 0$ is enough, since
then $l_0 = \tfrac2{\varepsilon} +
\tfrac{2(1+\varepsilon)}{\varepsilon} \log k = o(k)$.
\end{proof}

\subsection{Bounding the failure probability}
\label{failureprobanalysis}

\begin{lemma}
\label{totalfailureprobability}
For every $c > c_0$, there exist $0 < \alpha < 1$ and $\beta > 0$ such
that if we define $a = - \tfrac{\log \tau}{\log \frac{\rho^2}{\tau}}$
and $b = - \tfrac{\log \rho^2}{\log \frac{\rho^2}{\tau}}$, then for
every $\varepsilon_0 < \tfrac1{a}$, the failure probability of the
algorithm is at most $\mathrm{e}^{-\Theta(n^{\varepsilon_0})}$.
\end{lemma}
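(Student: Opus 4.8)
The plan is to aggregate the per-phase failure bounds established above and choose the parameters $\alpha,\beta,t$ so that every term is dominated by $\mathrm{e}^{-\Theta(n^{\varepsilon_0})}$. Since $c > c_0$, by Definition~\ref{c0} we may pick $\alpha,\beta$ with $\rho > \sqrt{\tau}$ strictly, so that $\rho^2/\tau > 1$ and also $\tau < 1$; these are the two inequalities that drive everything. First I would record the elementary growth facts: after $i$ iterations the ideal sizes are $n_i = \tau^i n$ and $k_i = \rho^i k = \rho^i c\sqrt{n}$, so the clique's relative density $k_i/\sqrt{n_i} = (\rho^2/\tau)^{i/2} c$ grows geometrically. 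Hence after $t = \Theta(\log\log n)$ iterations — concretely, $t$ chosen so that $(\rho^2/\tau)^t \approx n^{\varepsilon_0}$ for a target $\varepsilon_0$ — we have $k_t^2/n_t = (\rho^2/\tau)^t c^2 = \Theta(n^{\varepsilon_0})$, which is exactly the quantity controlling the Phase~2 bound in Corollary~\ref{failureprobabilitytildeK} and the bound in Lemma~\ref{probcliqueverticesaremaximal}.

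Next I would run the union bound over the three phases. For Phase~1, Lemmas~\ref{sizeofS} and~\ref{sizeofnextiteration} (applied inside each $G_i$, legitimately by Lemma~\ref{nextiterationisrandom}) give a per-iteration failure probability of order $\mathrm{e}^{-\Theta(n_i^{1-2\varepsilon_1})} + \mathrm{e}^{-\Theta(k_i^{1-2\varepsilon_2})}$; summing the geometric-in-$i$ exponents over $i = 0,\dots,t-1$ the dominant term is the last one, $\mathrm{e}^{-\Theta(n_t^{1-2\varepsilon_1})} + \mathrm{e}^{-\Theta(k_t^{1-2\varepsilon_2})}$, and since $n_t = \tau^t n$ with $t = \Theta(\log\log n)$ we have $n_t = n^{1-o(1)}$, so this is still $\mathrm{e}^{-\Theta(n^{\varepsilon_0})}$ for suitable small $\varepsilon_0$. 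Actually the careful bookkeeping here is what pins down $a$ and $b$: writing $t$ in terms of the target exponent, $\tau^t = n^{-a\varepsilon_0 + o(1)}$ and $\rho^{2t} = n^{-b\varepsilon_0+o(1)}$ by the definitions $a = -\log\tau/\log(\rho^2/\tau)$, $b = -\log\rho^2/\log(\rho^2/\tau)$, so $n_t = n^{1 - a\varepsilon_0}$, which is why we need $\varepsilon_0 < 1/a$ to keep $n_t$ polynomially large. For Phase~2, Corollary~\ref{failureprobabilitytildeK} gives $\mathrm{e}^{-\Theta((\rho^2/\tau)^t)} = \mathrm{e}^{-\Theta(n^{\varepsilon_0})}$ by our choice of $t$. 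For Phase~3, we feed $\tilde K$ (of size $\tilde k_t = \Theta(\rho^t k) = \Theta(n^{(1-b\varepsilon_0)/2})$, which is $\omega(\log n\log\log n)$ since $1 - b\varepsilon_0 > 0$) into Lemma~\ref{findKgivensubset}, obtaining failure probability $\mathrm{e}^{-\Theta(s\log k + \log n)} + \mathrm{e}^{-\Theta(k^{1-2\varepsilon_3})}$ with $k = c\sqrt n$, i.e. $\mathrm{e}^{-\Theta(n^{1/2 - o(1)})}$, which is negligible compared to the other terms.

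The main obstacle, and the step I would spend the most care on, is the self-consistent choice of $\varepsilon_0$ together with $t$ and the auxiliary exponents $\varepsilon_1,\varepsilon_2$: we want $t$ large enough that $(\rho^2/\tau)^t \gtrsim n^{\varepsilon_0}$ so Phases~2 and~3 succeed, yet small enough that $n_t = \tau^t n$ stays $\geq n^{\Omega(1)}$ so the Phase~1 concentration bounds $\mathrm{e}^{-\Theta(n_t^{1-2\varepsilon_1})}$ remain of the form $\mathrm{e}^{-\Theta(n^{\varepsilon_0})}$. The identities for $a$ and $b$ are precisely the exchange rate between "iterations" and "powers of $n$", and the constraint $\varepsilon_0 < 1/a$ is exactly the requirement that $t$ iterations do not shrink the graph below $n^{a\varepsilon_0} < n$ vertices; one also checks $1 - b\varepsilon_0 > 0$ follows from $\varepsilon_0 < 1/a$ since $b < a$ (because $\rho^2 > \tau$ forces $-\log\rho^2 < -\log\tau$). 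After fixing $\varepsilon_0 < 1/a$, choosing $t = \lceil \varepsilon_0 \log n / \log(\rho^2/\tau)\rceil$, and taking $\varepsilon_1,\varepsilon_2,\varepsilon_3$ to be any small fixed constants less than $\tfrac12$ with, say, $2\varepsilon_1 < 1 - $ (something small), the three displayed bounds all collapse to $\mathrm{e}^{-\Theta(n^{\varepsilon_0})}$, and a final union bound over the three phases completes the proof.
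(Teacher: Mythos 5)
Your proposal follows essentially the same route as the paper's proof: pick $\alpha,\beta$ with $\rho>\sqrt{\tau}$, take $t\approx \varepsilon_0\log n/\log(\rho^2/\tau)$ iterations, convert $\tau^t$ and $\rho^{2t}$ into powers of $n$ via $a$ and $b$, and union-bound the three phases using Lemmas~\ref{sizeofS}, \ref{sizeofnextiteration}, Corollary~\ref{failureprobabilitytildeK} and Lemma~\ref{findKgivensubset}, with the last-iteration term dominating phase~1 and $1-b\varepsilon_0>0$ guaranteeing $\tilde K$ is large enough for phase~3 (the paper's $\varepsilon_4$ plays the role of your target exponent, and its choice $\varepsilon_3=\tfrac{1-2\varepsilon_0}{2}$ matches your handling of the $k^{1-2\varepsilon_3}$ term). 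The only blemish is the passing claim that $t=\Theta(\log\log n)$ and hence $n_t=n^{1-o(1)}$; with your own concrete choice $t=\lceil \varepsilon_0\log n/\log(\rho^2/\tau)\rceil$ one has $t=\Theta(\log n)$ and $n_t=n^{1-a\varepsilon_0}$, which is what your subsequent bookkeeping (and the paper) actually uses, so this is a slip of wording rather than a gap.
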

\begin{proof}
In order for the probability proven in
\corref{failureprobabilitytildeK} to tend to $0$, we need $\tau$ and
$\rho$ to satisfy $\tfrac{\rho}{\sqrt{\tau}} > 1$. From \defref{c0} we
know that for $c > c_0$ there exist $\alpha, \beta$ that satisfy this
inequality. Numerical calculations show that $c_0$ is close to
$1.65$. The values of $\alpha$ and $\beta$ for which
$\tilde{c}(\alpha, \beta) = 1.65$ are $\alpha = 0.3728$ and $\beta =
0.72$. For these values, we get $\tau \approx 0.14787$ and $\rho
\approx 0.38455$, and $\tfrac{\rho}{\sqrt{\tau}} \approx 1.00003$.

Let the number of iterations be $t = \tfrac{ \varepsilon_4 \log
  n}{\log \frac{\rho^2}{\tau}}$ for some $0 < \varepsilon_4 <
\tfrac1{a}$. We use the union bound to estimate the failure
probability during the iteration phase of the algorithm. By Lemmas
\ref{sizeofS} and \ref{sizeofnextiteration}, this probability is at
most $\sum_{i=0}^t \big( \mathrm{e}^{-\Theta(n_i^{1 - 2
    \varepsilon_1})} + \mathrm{e}^{-\Theta(k_i^{1 - 2 \varepsilon_2})}
\big)$, which can be upper bounded by
\[
\mathrm{e}^{-\Theta(n^{(1 - 2
    \varepsilon_1)(1 - \varepsilon_4 a)})} +
\mathrm{e}^{-\Theta(n^{\frac12 (1 - 2 \varepsilon_2) (1 -
    \varepsilon_4 b)})}~.
\]
By \corref{failureprobabilitytildeK}, the failure probability in the
step of finding $\tilde{K}$ is bounded by
$\mathrm{e}^{-\Theta(n^{\varepsilon_4})}$. Finally, if $t$ is as
defined above, then assuming the first two phases succeed,
$|\tilde{K}| \geq \rho^t k - o( \rho^t k) = k^{1 - b \varepsilon_4} (
1 - o(1))$ (notice that $b = a - 1$ so $\varepsilon_4 < \tfrac1{a}$
implies that $1 - b \varepsilon_4 > 0$). $\tilde{K}$ is large enough
so that we can use \lemref{findKgivensubset}, to conclude that the
probability of failing in the third phase is at most
\[
\mathrm{e}^{-\Theta(n^{\frac12 (1 - \varepsilon_4
    b)} \log n)} + \mathrm{e}^{-\Theta(k^{1 - 2 \varepsilon_3})}~.
\]
For any choice of $0 < \varepsilon_1, \varepsilon_2 < \tfrac12$ and $0
< \varepsilon_4 < \tfrac1{a}$, denote 
\[
\varepsilon_0 = \min \big\{
\varepsilon_4, (1 - 2 \varepsilon_1) (1 - \varepsilon_4 a), \tfrac12
(1 - 2 \varepsilon_2)(1 - \varepsilon_4 b) \big\}~,
\]
and take $\varepsilon_3 = \tfrac{1 - 2 \varepsilon_0}2$ (notice that
$\varepsilon_3 > 0$ because $\varepsilon_0 < \tfrac12$). With these
parameters, the failure probability of the whole algorithm is bounded
by $\mathrm{e}^{-\Theta (n^{\varepsilon_0})}$.
\end{proof}

\section{Refinements}

\subsection{A variation of this algorithm that works for smaller cliques}
\label{variation}
The reason our algorithm works is that the clique vertices in $V(G_i)
\setminus S_i$ have a boost of around $\tfrac12 \alpha k_i$ (which is
$c \sqrt{\alpha}$ times the standard deviation) to their degrees, so
this increases the probability that their degree is above the
threshold. If we could increase the boost of the clique vertices'
degrees (in terms of number of standard deviations) while still
keeping the graph for the next iteration random, then we would be able
to find the hidden clique for smaller values of $c$. One way to
achieve this, is by finding a subset $\tilde{S}_i$ of $S_i$, that has
$\gamma n_i$ vertices ($\gamma < \alpha$) and $\delta k_i$ clique
vertices. If we count just the number of neighbors the vertices in
$V(G_i) \setminus S_i$ have in $\tilde{S}_i$, then the clique vertices
have a boost of around $\tfrac12 \delta k_i$ to their degree, which is
$c \tfrac{\delta}{\sqrt{\gamma}}$ times the standard deviation.

The subset of $S_i$ that we use in this variation is the set of all
vertices $v \in S_i$ that have $d_{S_i}(v) \geq \tfrac12 | S_i | +
\eta \tfrac{\sqrt{| S_i|}}2$, for some $\eta > 0$. Since these degrees
are not independent we cannot use the same concentration results we
used before, so we first prove the following concentration result.

\begin{lemma}
\label{numberabovethresholddependent}
Let $G \in G\big(n,\tfrac12 \big)$ and $a, c' > 0$. Define a random
variable
\[
X = \big| \big\{ v \in V(G) : d(v) \geq \tfrac12 n + a
\tfrac{\sqrt{n}}2 \big\} \big|~.
\]
Then for every $0 < \varepsilon' < \tfrac14$ it holds that
\[
\pr \big( \big| X - \Phibar(a) n \big| \geq c' n^{1 - \varepsilon'}
\big) \leq 2 \mathrm{e}^{-\pi c'^4 n^{1 - 4 \varepsilon'} / 32}~.
\]
\end{lemma}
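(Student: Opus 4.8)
The plan is to remove the correlations between vertex degrees by a single bipartition of $V$, reducing everything to ordinary Chernoff-type estimates. Write $T=\tfrac12 n+a\tfrac{\sqrt n}2$, so $X=\sum_{v\in V}\mathbf 1[d(v)\ge T]$. First I would pin down $\E X$: each degree $d(v)$ is a $B(n-1,\tfrac12)$ variable, so a quantitative normal approximation (Berry--Esseen, or the same kind of CLT estimate the paper already relies on for its $\Phibar$-definitions) gives $\pr(d(v)\ge T)=\Phibar(a)+O(n^{-1/2})$ and hence $\E X=\Phibar(a)\,n+O(\sqrt n)$. Since $\sqrt n=o(n^{1-\varepsilon'})$, it suffices to bound $\pr(|X-\E X|\ge\lambda)$ for $\lambda$ of order $n^{1-\varepsilon'}$ and then absorb the $O(\sqrt n)$ gap.

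Next, fix a balanced partition $V=A\sqcup B$ with $|A|=|B|=n/2$, and for $v\in A$ split $d(v)=d_A(v)+d_B(v)$ into the numbers of neighbours of $v$ inside $A$ and inside $B$. Let $\mathcal A$ be the $\sigma$-field generated by all edges with both endpoints in $A$. The point is that, conditionally on $\mathcal A$, the indicators $\{\mathbf 1[d(v)\ge T]\}_{v\in A}$ are mutually independent: given $\mathcal A$ the number $d_A(v)$ is fixed, so $\mathbf 1[d(v)\ge T]=\mathbf 1[d_B(v)\ge T-d_A(v)]$ depends only on the private block of edge-bits $\{e_{uv}:u\in B\}$, and these blocks are pairwise disjoint over $v\in A$ and jointly independent of $\mathcal A$. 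Set $X_A=\sum_{v\in A}\mathbf 1[d(v)\ge T]$ and $\mu_A=\E[X_A\mid\mathcal A]$, define $X_B,\mu_B$ symmetrically with respect to the $\sigma$-field $\mathcal B$ of edges inside $B$, and note $X=X_A+X_B$.

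Two estimates then finish it. First, conditionally on $\mathcal A$ the variable $X_A$ is a sum of $n/2$ independent $\{0,1\}$ terms, so Hoeffding's inequality gives $\pr(|X_A-\mu_A|\ge t\mid\mathcal A)\le 2\mathrm{e}^{-4t^2/n}$, hence $\pr(|X_A-\mu_A|\ge t)\le 2\mathrm{e}^{-4t^2/n}$. Second, writing $h(x)=\pr\big(B(\tfrac n2,\tfrac12)\ge T-x\big)$ one has $\mu_A=\sum_{v\in A}h\big(d_A(v)\big)$, a function of the $\binom{n/2}{2}<n^2/8$ independent edge-bits inside $A$; flipping one such bit moves two of the $d_A(v)$ by $1$, and $h(x+1)-h(x)=\pr\big(B(\tfrac n2,\tfrac12)=T-x-1\big)$ is at most the maximal binomial mass $\binom{n/2}{\lfloor n/4\rfloor}2^{-n/2}\le\sqrt{4/(\pi n)}$, so $\mu_A$ has bounded differences $\le 4/\sqrt{\pi n}$ in each coordinate. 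McDiarmid's bounded-differences inequality then yields $\pr(|\mu_A-\E\mu_A|\ge t)\le 2\mathrm{e}^{-\pi t^2/n}$, with $\E\mu_A=\E X_A$ by the tower rule. Combining, $\pr(|X_A-\E X_A|\ge 2t)\le 4\mathrm{e}^{-\pi t^2/n}$, likewise for $B$, so $\pr(|X-\E X|\ge 4t)\le 8\mathrm{e}^{-\pi t^2/n}$; taking $4t$ of order $c'n^{1-\varepsilon'}$ and folding in the first paragraph gives a bound $\mathrm{e}^{-\Theta(c'^2 n^{1-2\varepsilon'})}$, which for large $n$ is bounded above by the asserted $2\mathrm{e}^{-\pi c'^4 n^{1-4\varepsilon'}/32}$ (as $1-2\varepsilon'>1-4\varepsilon'$).

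The one genuinely essential move is the bipartition of the second paragraph, and that is what I expect to be the crux: without it $X$ is only a $2$-Lipschitz function of $\binom n2\asymp n^2$ independent edge-bits, and a naive edge-exposure martingale controls deviations only down to order $n$, which is hopelessly weak against a target of order $n^{1-\varepsilon'}$. Conditioning on the edges inside one half makes the $n/2$ threshold-indicators of that half a genuine independent family (so Chernoff bites at the $\sqrt n$ scale), while the leftover randomness $\mu_A$ is a very flat --- $1/\sqrt n$-Lipschitz --- function of the within-half edges (so McDiarmid again bites at the $\sqrt n$ scale). The only computation needing care is the Lipschitz bound on $h$, where the maximal binomial coefficient, hence the $\pi$, enters; everything else is bookkeeping.
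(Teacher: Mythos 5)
Your argument is correct, but it is genuinely different from the paper's. The paper keeps the full edge-exposure filtration and pays for the discontinuity of the indicators by smoothing them: it sandwiches $X$ between sums $F,G$ of piecewise-linear ramp functions of width $\varepsilon\sqrt n$, bounds the smoothing bias by $\varepsilon n/\sqrt{2\pi}$, and applies Azuma to $F$ and $G$, which are Lipschitz with constant $\Theta(1/(\varepsilon\sqrt n))$ per edge; optimizing $\varepsilon\asymp n^{-\varepsilon'}$ is exactly what produces the $c'^4$ and the exponent $n^{1-4\varepsilon'}$ (and the restriction $\varepsilon'<\tfrac14$). You instead restore genuine independence by a single bipartition: conditionally on the edges inside $A$, the threshold indicators of the vertices of $A$ depend on disjoint blocks of cross-edges, so Hoeffding controls $X_A-\mu_A$ at scale $\sqrt n$, while $\mu_A=\sum_{v\in A}h(d_A(v))$ is handled by McDiarmid using the fact that $h$ has increments bounded by the maximal binomial point mass $\le 2/\sqrt{\pi n}$ — which is precisely the $1/\sqrt n$-flatness the paper's ramp construction simulates by hand. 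Your route is arguably cleaner and provably stronger: it yields $\mathrm{e}^{-\Theta(c'^2 n^{1-2\varepsilon'})}$, valid for the whole range $0<\varepsilon'<\tfrac12$, which dominates the stated bound for large $n$; the only caveat is that the specific constant $\pi c'^4/32$ in the lemma is recovered only asymptotically rather than matched exactly, but the paper's own proof carries $(1+O(1/n))$ and Berry--Esseen error factors, so it is no more exact in this respect. Your accounting is right at the delicate points: conditional independence of the indicators within a half, the factor-two bounded difference ($4/\sqrt{\pi n}$) because one internal edge moves two degrees, and the absorption of the $O(\sqrt n)$ discrepancy between $\E X$ and $\Phibar(a)n$ into the deviation $c'n^{1-\varepsilon'}$.
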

\begin{proof}
For every $v \in V(G)$ define a random variable
\[
X_v = \left\{ \begin{array}{ll} 1 & d(v) \geq \tfrac12 n + a
  \tfrac{\sqrt{n}}2 \\ 0 & \textrm{otherwise}
\end{array} \right.~.
\]
Then $X = \sum X_v$. By \corref{expectednumberabovethreshold} we have
$| \Phibar(a)n - \E X | \leq c \sqrt{n}$ for some constant $c$.  To
prove that $X$ is concentrated around its mean we define additional
random variables. Let $\varepsilon > 0$ to be defined later, and
define three thresholds:
\[
t_1 = \tfrac12 n + (a - \varepsilon) \tfrac{\sqrt{n}}2
~,~~~~t_2 = \tfrac12 n + a \tfrac{\sqrt{n}}2~,
~~~~\text{and}~~~~
t_3 = \tfrac12 n + (a + \varepsilon) \tfrac{\sqrt{n}}2~.
\]
For every $v \in V(G)$ define
\[
\begin{array}{ll}
F_v = \left\{ \begin{array}{ll} 0 & d(v) < t_1 \\ \frac{2 \big( d(v) -
    t_1 \big)}{\varepsilon \sqrt{n}} & t_1 \leq d(v) \leq t_2 \\ 1 &
  d(v) > t_2
\end{array} \right.~, & G_v = \left\{ \begin{array}{ll} 0 & d(v) < t_2 \\ \frac{2 \big( d(v) - t_2 \big)}{\varepsilon
    \sqrt{n}} & t_2 \leq d(v) \leq t_3 \\ 1 & d(v) > t_3
\end{array} \right.
\end{array}
\]
Define $F = \sum_v F_v$ and $G = \sum_v G_v$. For every $v \in V$, we
bound $\E F_v - \E X_v$ and $\E X_v - \E G_v$.
\begin{eqnarray}
\label{F}
\E F_v - \E X_v & = & 2^{-n} \sum_{i=t_1}^{t_2} \tfrac{2(i -
  t_1)}{\varepsilon \sqrt{n}} \tbinom{n}{i} \leq 2^{-n}
\sum_{i=t_1}^{t_2} \tbinom{n}{i} \leq \tfrac{\varepsilon \sqrt{n}}2
2^{-n} \tbinom{n}{\frac{n}2} \leq \tfrac{\varepsilon}{\sqrt{2 \pi}}
\big( 1 + O \big( \tfrac1{n} \big) \big)
\end{eqnarray}
where the last two inequalities follow from the fact that
$\tbinom{n}{\frac{n}2}$ is the maximal binomial coefficient, and from
Stirling's approximation (see, for example \cite{AS64}): $n! = \sqrt{2
  \pi n} \big( \tfrac{n}{\mathrm{e}} \big)^n \big( 1 + O \big(
\tfrac1{n} \big) \big)$. Repeating this calculation for $\E X_v - \E
G_v$ gives
\begin{equation}
\label{G}
\E X_v - \E G_v = 2^{-n} \sum_{i = t_2}^{t_3} \big( 1 - \tfrac{2(i -
  t_2)}{\varepsilon \sqrt{n}} \big) \tbinom{n}{i} \leq
\tfrac{\varepsilon}{\sqrt{2 \pi}} \big( 1 + O \big(\tfrac1{n} \big)
\big)~.
\end{equation}
From \eqref{F} and \eqref{G} we have 
\[
\pr \big( \big| X - \E X \big| \geq \lambda n \big) \leq \pr \big(F -
\E F \geq \big( \lambda - \tfrac{\varepsilon}{\sqrt{2 \pi}} \big) n
\big) + \pr \big(G - \E G \leq -\big( \lambda -
\tfrac{\varepsilon}{\sqrt{2 \pi}} \big) n \big)~.
\]
Thus, we need to calculate the concentration of $F$ and $G$. Both are
edge exposure martingales with Lipschitz constant $\tfrac2{\varepsilon
  \sqrt{n}}$. Therefore, by Azuma's inequality (see, for example
\cite{McDiarmid89}) we get:
\[
\pr \big( F - \E F \geq \big( \lambda - \tfrac{\varepsilon}{\sqrt{2
    \pi}} \big) n \big) + \pr \big( G - \E G \leq - \big( \lambda -
\tfrac{\varepsilon}{\sqrt{2 \pi}} \big) n \big) \leq 2\textrm{e}^{-
  \frac{(\lambda - \frac{\varepsilon}{\sqrt{2 \pi}})^2 n^2}{2
    \binom{n}{2} ( \frac2{\varepsilon \sqrt{n}} )^2}} \leq 2
\mathrm{e}^{-\varepsilon^2 (\lambda - \frac{\varepsilon}{\sqrt{2
      \pi}})^2 n / 4}~.
\]
Choosing $\lambda = c' n^{-\varepsilon'}$ and $\varepsilon = \tfrac12
\sqrt{2 \pi} c' n^{-\varepsilon'}$ concludes the proof.
\end{proof}

\begin{lemma}
\label{subsetofS}
Let $\tilde{S}_0 = \big\{ v \in S_0 : d_{S_0}(v) \geq \tfrac12 | S_0|
+ \eta \tfrac{\sqrt{| S_0|}}2 \big\}$. Then for every $0 <
\varepsilon_1 < \tfrac14$, whp($\mathrm{e}^{-\Theta(n^{1 - 4
    \varepsilon_1})}$) we have $\big| |\tilde{S}_0| - \gamma n \big|
\leq O(n^{1 - \varepsilon_1})$, where $\gamma = \alpha
\Phibar(\eta)$. Furthermore, for every $0 < \varepsilon_2 < \tfrac12$,
whp($\mathrm{e}^{-\Theta(k^{1 - 2 \varepsilon_2})}$) we have $\big|
|\tilde{S} \cap K| - \delta k \big| \leq O(k^{1 - \varepsilon_2})$,
where $\delta = \alpha\Phibar(\eta - c \sqrt{\alpha})$.
\end{lemma}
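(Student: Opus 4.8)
The plan is to condition on $S_0$ being typical and then analyze $\tilde S_0$ inside the induced subgraph $G[S_0]$, treating its non-clique part $\tilde S_0\setminus K$ and its clique part $\tilde S_0\cap K$ separately. Write $m=|S_0|$, $m_K=|S_0\cap K|$, $A=S_0\setminus K$, and let $T=\tfrac12 m+\eta\tfrac{\sqrt m}2$ be the threshold in the definition of $\tilde S_0$. Conditionally on $S_0$, the graph $G[S_0]$ is a copy of $G(m,\tfrac12,m_K)$: the hidden clique restricted to $S_0$ is a uniformly random $m_K$-subset of $S_0$, and every edge outside it is an independent fair coin. By \lemref{sizeofS} (with a suitable exponent in each of the two parts below) we may further condition on $\big|m-\alpha n\big|\le O(n^{1-\varepsilon_1})$ and $\big|m_K-\alpha k\big|\le O(k^{1-\varepsilon_2})$; the failure probability this contributes will be dominated by the bounds claimed in the lemma.

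For the first bound, since $\big|\tilde S_0\cap K\big|\le m_K=O(\sqrt n)=O(n^{1-\varepsilon_1})$ it suffices to control $\big|\tilde S_0\setminus K\big|$. The key observation is that for $v\in A$ the count $d_{S_0}(v)$ uses only edges incident to $v$, none of which lies inside the planted clique; hence $\big|\tilde S_0\setminus K\big|$ is a function of the i.i.d.\ $\tfrac12$-coins on the edges touching $A$ alone, and therefore has exactly the same distribution as $\big|\{v\in A:d(v)\ge T\}\big|$ in the clique-free graph $G(m,\tfrac12)$. The latter differs by at most $m_K$ from the full count $X=\big|\{v:d(v)\ge T\}\big|$ in $G(m,\tfrac12)$, and in the parametrization of \lemref{numberabovethresholddependent} the threshold $T$ corresponds exactly to $a=\eta$. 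So \lemref{numberabovethresholddependent} gives $\big|X-\Phibar(\eta)m\big|\le O(m^{1-\varepsilon_1})$ whp($\mathrm{e}^{-\Theta(m^{1-4\varepsilon_1})}$), and combining with $m=\alpha n+O(n^{1-\varepsilon_1})$ and $\gamma=\alpha\Phibar(\eta)$ yields $\big|\,|\tilde S_0|-\gamma n\,\big|\le O(n^{1-\varepsilon_1})$ whp($\mathrm{e}^{-\Theta(n^{1-4\varepsilon_1})}$).

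For the second bound, fix $v\in S_0\cap K$ and split $d_{S_0}(v)=(m_K-1)+d_A(v)$, where $m_K-1$ is the deterministic contribution of the clique and $d_A(v)\sim\mathrm{Bin}(|A|,\tfrac12)$. As in the treatment of the clique vertices in the proof of \lemref{sizeofnextiteration}, the event $d_{S_0}(v)\ge T$ is equivalent to $d_A(v)$ exceeding a threshold whose standardized value is $\eta\sqrt{m/|A|}-m_K/\sqrt{|A|}+o(1)$; under our conditioning $m_K/\sqrt{|A|}=\alpha k/\sqrt{\alpha n}\,(1+o(1))=c\sqrt\alpha+O(k^{-\varepsilon_2})$, so this standardized value is $\eta-c\sqrt\alpha+O(k^{-\varepsilon_2})$ and $\pr(v\in\tilde S_0)=\Phibar(\eta-c\sqrt\alpha)+O(k^{-\varepsilon_2})$. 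Crucially, for distinct $v,v'\in S_0\cap K$ the variables $d_A(v)$ and $d_A(v')$ depend on disjoint edge sets, so the events $\{v\in\tilde S_0\}$, $v\in S_0\cap K$, are mutually independent. Hence, applying \corref{numberabovethresholdindependent} exactly as in \lemref{sizeofnextiteration}, $\big|\tilde S_0\cap K\big|$ is concentrated within $O(k^{1-\varepsilon_2})$ of $m_K\Phibar(\eta-c\sqrt\alpha)=\delta k+O(k^{1-\varepsilon_2})$ whp($\mathrm{e}^{-\Theta(k^{1-2\varepsilon_2})}$), with $\delta=\alpha\Phibar(\eta-c\sqrt\alpha)$, which is the claim.

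I expect the main obstacle to be the concentration of $\big|\tilde S_0\setminus K\big|$ in the first bound. A direct bounded-differences argument is hopeless: $\big|\tilde S_0\setminus K\big|$ is a sum of $\Theta(n)$ indicators, each depending on $\Theta(n)$ edges, so a single edge moves the count by $O(1)$ and Azuma over $\Theta(n^2)$ edges yields only a bound like $\mathrm{e}^{-\Theta(n^{-2\varepsilon_1})}$, which is useless. This is exactly why \lemref{numberabovethresholddependent}, with its smoothing of the threshold indicator, is needed, and the real content of the first bound is just the observation that restricting the count to non-clique vertices makes the planted edges irrelevant, so that \lemref{numberabovethresholddependent} applies despite the presence of the hidden clique. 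The remaining work is routine $\varepsilon$-bookkeeping: in the second bound one should invoke \lemref{sizeofS} for $|S_0|$ with exponent $\varepsilon_2/2$ rather than $\varepsilon_1$, so that the relative error $O(n^{-\varepsilon_2/2})=O(k^{-\varepsilon_2})$ it introduces is harmless and its failure probability $\mathrm{e}^{-\Theta(n^{1-\varepsilon_2})}$ is dominated by $\mathrm{e}^{-\Theta(k^{1-2\varepsilon_2})}$.
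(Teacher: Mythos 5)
Your proposal is correct and follows essentially the same route as the paper: condition on the sizes from \lemref{sizeofS}, apply \lemref{numberabovethresholddependent} with $a=\eta$ for $|\tilde S_0|$, and handle $|\tilde S_0\cap K|$ by shifting the threshold by the deterministic clique contribution and invoking \corref{numberabovethresholdindependent} as in \lemref{sizeofnextiteration}. You are in fact a bit more explicit than the paper on two points it glosses over --- that \lemref{numberabovethresholddependent} is stated for clique-free $G(m,\tfrac12)$ (so the at most $m_K=O(\sqrt n)$ clique vertices must be absorbed as a lower-order correction) and the $\varepsilon$-bookkeeping for the conditioning --- but the argument is the same.
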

\begin{proof}
By \lemref{sizeofS}, whp($\mathrm{e}^{-\Theta(n^{1 -2 \varepsilon_1})}
+ \mathrm{e}^{-\Theta(k^{1 - 2 \varepsilon_2})}$) the size of $S_0$ is
$(1 + o(1)) \alpha n$ and the number of clique vertices in $S_0$ is
$(1 + o(1)) \alpha k$. The first part of the Lemma follows directly
from \lemref{numberabovethresholddependent} by setting $\varepsilon' =
\varepsilon_1$. For the second part of the Lemma, consider a clique
vertex $v \in S_0$. Having $d_{S_0}(v) \geq \tfrac12 \alpha n + \eta
\tfrac{\sqrt{\alpha n}}2$ is equivalent to having
\[
d_{S_0 \setminus
  K}(v) \geq \tfrac12 \alpha (n - k) + \tfrac12 (\eta - c
\sqrt{\alpha}) \sqrt{\tfrac{n}{n-k}} \sqrt{\alpha (n-k)}~.
\] 
Thus, setting $\varepsilon = \varepsilon_2$ in
\corref{numberabovethresholdindependent}, gives that
whp($\mathrm{e}^{-\Theta(k^{1 - 2 \varepsilon_2})}$), $\big|
|\tilde{S}_0 \cap K| - \delta' k \big| \leq O(k^{1 - \varepsilon_2})$,
where $\delta' = \alpha \Phibar \big( (\eta - c \sqrt{\alpha})
\sqrt{\tfrac{n}{n-k}} \big)$. The difference between $\delta$ and
$\delta'$ is of order $\tfrac1{\sqrt{n}}$, which means that the result
holds for $\big| | \tilde{S}_0 \cap K| - \delta k \big|$ as well.
\end{proof}

\begin{theorem}
Consider the variant of the algorithm, where $\tilde{V}_i$ is defined
by
\[
\tilde{V}_i = \big\{ v \in V(G_i) \setminus S_i: d_{\tilde{S}_i}(v)
\geq \tfrac12 |\tilde{S}_i| + \beta \tfrac{\sqrt{|\tilde{S}_i|}}2
\big\}
\]
with $\tilde{S}_i, \gamma$ as defined in \lemref{subsetofS}. If $c
\geq 1.261$ then there exist $\alpha, \beta, \eta$ for which running
the variant of the algorithm described above on a random graph in
$G(n, \tfrac12, c \sqrt{n})$ finds the hidden clique
whp($\mathrm{e}^{-\Theta(n^{\varepsilon_0})}$) for some $\varepsilon_0
= \varepsilon_0(c)$.
\end{theorem}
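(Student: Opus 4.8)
The plan is to rerun the analysis of \thmref{thetheorem} almost verbatim, the only change being that the degree threshold defining $\tilde{V}_i$ now counts neighbours in $\tilde{S}_i \subseteq S_i$ rather than in $S_i$. I keep $\tau = (1-\alpha)\Phibar(\beta)$ exactly as before, and with $\gamma = \alpha\Phibar(\eta)$, $\delta = \alpha\Phibar(\eta-c\sqrt{\alpha})$ from \lemref{subsetofS}, I replace $\rho$ by
\[
\rho \;=\; (1-\alpha)\,\Phibar\!\big(\beta - \tfrac{c\delta}{\sqrt{\gamma}}\big)
\;=\; (1-\alpha)\,\Phibar\!\Big(\beta - c\sqrt{\alpha}\;\tfrac{\Phibar(\eta-c\sqrt{\alpha})}{\sqrt{\Phibar(\eta)}}\Big)~.
\]
First I would prove the analogue of \lemref{nextiterationisrandom}: $G_i$ is still a copy of $G(\tilde{n}_i,\tfrac12,\tilde{k}_i)$. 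This holds because $\tilde{S}_i$ is a deterministic function of the edges inside $S_i$, while $\tilde{V}_i$ is a deterministic function of the edges between $\tilde{S}_i$ and $V(G_i)\setminus S_i$; these two edge sets are disjoint from each other and from the edges inside $\tilde{V}_i$, so the vertex set of $G_{i+1}$ is determined before any edge of $G_{i+1}$ is exposed. Consequently, as in the original proof, it is enough to analyse one iteration starting from $G\in G(n,\tfrac12,k)$.

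For that single iteration (the analogue of \lemref{sizeofnextiteration}) I would first invoke \lemref{subsetofS}: whp $|\tilde{S}_0| = \gamma n(1+o(1))$ and $|\tilde{S}_0\cap K| = \delta k(1+o(1))$, and I condition on such a realisation of the edges inside $S_0$. For distinct $v,v'\in V(G_0)\setminus S_0$ the edge sets $\{(v,u):u\in\tilde{S}_0\}$ and $\{(v',u):u\in\tilde{S}_0\}$ are disjoint and still unexposed, so the variables $d_{\tilde{S}_0}(v)$ are conditionally independent. A non-clique $v$ has $d_{\tilde{S}_0}(v)\sim B(|\tilde{S}_0|,\tfrac12)$ and enters $\tilde{V}_0$ with probability $\Phibar(\beta)(1+o(1))$; a clique $v$ has $d_{\tilde{S}_0}(v) = |\tilde{S}_0\cap K| + B(|\tilde{S}_0\setminus K|,\tfrac12)$, so its excess over the threshold $\tfrac12|\tilde{S}_0| + \beta\tfrac{\sqrt{|\tilde{S}_0|}}{2}$ carries a boost of $\tfrac12\delta k = \tfrac{c\delta}{\sqrt{\gamma}}\cdot\tfrac12\sqrt{\gamma n}$ standard deviations, so it enters $\tilde{V}_0$ with probability $\Phibar(\beta - c\delta/\sqrt{\gamma})(1+o(1))$. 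Then \corref{numberabovethresholdindependent}, applied once to the $\approx(1-\alpha)n$ non-clique vertices outside $S_0$ and once to the $\approx(1-\alpha)k$ clique vertices outside $S_0$, gives $\big||\tilde{V}_0| - \tau n\big|\le O(n^{1-\varepsilon_1})$ and $\big||\tilde{V}_0\cap K| - \rho k\big|\le O(k^{1-\varepsilon_2})$ with the stated probabilities; just as in \lemref{sizeofnextiteration}, using $\sqrt{|\tilde{S}_0|}$ instead of $\sqrt{\gamma n}$ in the threshold shifts the argument of $\Phibar$ only by $O(1/\sqrt{n})$, which is absorbed into the error term.

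Next I would assemble the three phases and the failure bound exactly as in \lemref{totalfailureprobability}: run the iteration $t = \varepsilon_4\log n/\log(\rho^2/\tau)$ times; phases two and three are untouched, so \corref{failureprobabilitytildeK} and \lemref{findKgivensubset} apply word for word once $\rho^2/\tau>1$. The only new term in the union bound is the per-iteration failure of extracting $\tilde{S}_i$, bounded by \lemref{subsetofS} by $\mathrm{e}^{-\Theta(n_i^{1-4\varepsilon_1})} + \mathrm{e}^{-\Theta(k_i^{1-2\varepsilon_2})}$; this merely weakens the exponent of $n$ from $1-2\varepsilon_1$ to $1-4\varepsilon_1$, and since we are free to take $\varepsilon_1<\tfrac14$, the overall failure probability is still $\mathrm{e}^{-\Theta(n^{\varepsilon_0})}$ with $\varepsilon_0 = \min\{\varepsilon_4,(1-4\varepsilon_1)(1-\varepsilon_4 a),\tfrac12(1-2\varepsilon_2)(1-\varepsilon_4 b)\}$ and $a,b$ defined from the new $\rho,\tau$ as before.

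The last thing to check is that for every $c\ge 1.261$ there are $\alpha,\beta,\eta$ with $\rho^2/\tau>1$, i.e.
\[
\sqrt{1-\alpha}\;\Phibar\!\Big(\beta - c\sqrt{\alpha}\;\tfrac{\Phibar(\eta-c\sqrt{\alpha})}{\sqrt{\Phibar(\eta)}}\Big) \;>\; \sqrt{\Phibar(\beta)}~.
\]
As $\eta\to-\infty$ the factor $\Phibar(\eta-c\sqrt{\alpha})/\sqrt{\Phibar(\eta)}$ tends to $1$ and this degenerates into the inequality defining $c_0\approx 1.65$ in \defref{c0}; the gain of the variant is that for intermediate $\eta$ one has $\Phibar(\eta-c\sqrt{\alpha})>\sqrt{\Phibar(\eta)}$, so the clique's boost is worth strictly more than $c\sqrt{\alpha}$ standard deviations, which increases $\rho$ while $\tau$ is unchanged. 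A numerical minimisation of the critical $c$ over $0<\alpha<1$, $\beta>0$, $\eta\in\R$ brings it down to at most $1.261$. The hard part is exactly this optimisation: all the structural work is a transcription of earlier arguments, and the only genuinely quantitative point is showing that $\Phibar(\eta-c\sqrt{\alpha})/\sqrt{\Phibar(\eta)}$ can be pushed far enough above $1$ — without letting $\Phibar(\beta)$, and hence $\tau$, collapse — to reach the threshold $1.261$.
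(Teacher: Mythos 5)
Your proposal follows essentially the same route as the paper: keep $\tau$, replace $\rho$ by $(1-\alpha)\Phibar\big(\beta - c\delta/\sqrt{\gamma}\big)$ using Lemma~\ref{subsetofS} in place of Lemma~\ref{sizeofS} (forcing $\varepsilon_1 < \tfrac14$), rerun the three-phase analysis and Lemma~\ref{totalfailureprobability} verbatim, and settle the threshold $c \geq 1.261$ numerically; in fact you spell out details (randomness preservation with $\tilde{S}_i$, conditional independence of $d_{\tilde{S}_0}(v)$, the $c\delta/\sqrt{\gamma}$ boost computation) that the paper leaves implicit. The only thing the paper adds beyond your write-up is the explicit witness $\alpha=0.8$, $\beta=2.3$, $\eta=1.2$ (giving $\rho/\sqrt{\tau}\approx 1.0008$) for the numerical claim, which you assert but do not instantiate.
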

\begin{proof}
We follow the proof of \thmref{thetheorem}, with two differences. The
first is that we use \lemref{subsetofS} instead of \lemref{sizeofS},
which implies that instead of demanding $\varepsilon_1 < \tfrac12$ we
demand $\varepsilon_1 < \tfrac14$. The second is that in
\lemref{sizeofnextiteration} and everything that follows we use a
different definition for $\rho$. Since now the clique vertices' degree
boost is $c \tfrac{\delta}{\sqrt{\gamma}}$ times the standard
deviation, we define $\rho = (1 - \alpha) \Phibar \big( \beta -
\tfrac{c \delta}{\sqrt{\gamma}} \big)$. Next, for every $\alpha,
\beta, \eta$, we denote by $\tilde{c}(\alpha, \beta, \eta)$ the
minimal $c$ for which $\tfrac{\rho}{\sqrt{\tau}} \geq 1$. Denote the
infimum of $\tilde{c}(\alpha, \beta, \eta)$ by $c^*$. Numerical
calculations show that $c^*$ is close to $1.261$. The values of
$\alpha$, $\beta$ and $\eta$ for which which $\tilde{c}(\alpha, \beta,
\eta) = 1.261$ are $\alpha = 0.8$, $\beta = 2.3$ and $\eta = 1.2$. For
these values, we get $\tau \approx 0.0021448$ and $\rho \approx
0.046348$, and $\tfrac{\rho}{\sqrt{\tau}} \approx 1.0008$.
\end{proof}

\subsection{\texorpdfstring{Finding hidden dense graphs in $G(n,p)$}{}}
\label{hiddendensegraph}
Define the random graph model $G(n,p,k,q)$ for $0 < p < q < 1$. Given
a set of $n$ vertices, randomly choose a subset $K$ of $k$
vertices. For every pair of vertices $(u,v)$, the edge between them
exists with probability $p$ if at least one of the two vertices is in
$V \setminus K$, and with probability $q$ if they are both in $K$. The
model discussed in the previous sections is equivalent to $G \big( n,
\tfrac12, c \sqrt{n}, 1 \big)$.

Next, we define a generalization of the algorithm from the previous
section. This algorithm has the same three phases as before. In the
first phase, the definition of $\tilde{V}_i$ is
different. $\tilde{V}_i$ is defined as the set of vertices with at
least $p |S_i| + \beta \sqrt{p (1-p) |S_i|}$ neighbors in
$S_i$. Namely,
\[
\tilde{V}_i = \big\{ v \in V(G_i) \setminus S_i : d_S(v) \geq p |S_i|
+ \beta \sqrt{p (1-p) |S_i|} \big\}~.
\]
Define $\rho' = (1-\alpha) \Phibar \big( \beta - c \sqrt{\alpha}
\tfrac{q - p}{\sqrt{p(1-p)}}\big)$. In the second phase, after $t$
iterations, define $K'$ to be the set of $\rho'^t k$ largest degree
vertices in $G_t$, and let $\tilde{K}$ contain all the vertices in
$G_t$ that have at least $\tfrac12 (p + q)$ neighbors in $K'$. In the
third phase, let $K'$ be the set of vertices containing $\tilde{K}$
and all the vertices in $G$ that have at least $\tfrac12 (p + q) |
\tilde{K} |$ neighbors in $\tilde{K}$. Let $K^*$ be the set of all
vertices in $G$ that have at least $\tfrac12 (p + q)k$ neighbors in
$K'$. The algorithm returns $K^*$ as the candidate for the dense
graph.

\begin{theorem}
\label{theoremforgeneralp}
If $c > c_0 \tfrac{\sqrt{p(1-p)}}{q - p}$ then there exist $0 < \alpha
< 1$ and $\beta > 0$ for which given a graph $G \in G(n,p,c
\sqrt{n},q)$, the above algorithm finds the hidden dense graph
whp($\mathrm{e}^{-\Theta(n^{\varepsilon_0})}$) for $\varepsilon_0 =
\varepsilon_0(c)$.
\end{theorem}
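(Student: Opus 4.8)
The plan is to re-run the proof of \thmref{thetheorem} essentially line by line, tracking how the two constants $p$ and $q$ enter. The only role $q$ plays is to determine the \emph{size of the degree boost} enjoyed by the hidden-dense-graph vertices: a vertex of $K$ outside $S_i$ has, on average, $(q-p)|S_i\cap K|\approx (q-p)\alpha k_i$ more neighbors in $S_i$ than a non-dense vertex, and since the relevant standard deviation is $\sqrt{p(1-p)\alpha n_i}$ (the $q(1-q)|S_i\cap K|=O(k_i)=o(n_i)$ contribution to the variance being negligible), this boost equals $\hat c\sqrt\alpha$ standard deviations with $\hat c := c\,\tfrac{q-p}{\sqrt{p(1-p)}}$. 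Consequently, keeping $\tau = (1-\alpha)\Phibar(\beta)$ and writing $\rho' = (1-\alpha)\Phibar(\beta - \hat c\sqrt\alpha)$, the hypothesis $c > c_0\tfrac{\sqrt{p(1-p)}}{q-p}$ is exactly $\hat c > c_0$, so by \defref{c0} there are $\alpha,\beta$ with $\rho'/\sqrt\tau > 1$.

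First I would establish the analogue of \lemref{nextiterationisrandom} by the same exposure argument — reveal the $k$ dense vertices, then $S_i$, then only the edges between $S_i$ and $V(G_i)\setminus S_i$; this already determines $\tilde V_i$, so $G_i$ is a copy of $G(\tilde n_i,p,\tilde k_i,q)$. \lemref{sizeofS} carries over unchanged (it uses only Hoeffding for the sampling of $S_0$), and the analogue of \lemref{sizeofnextiteration} follows from the general-$p$ version of \corref{numberabovethresholdindependent}: for $v\in(V\setminus S_0)\setminus K$ we have $d_{S_0}(v)\sim B(|S_0|,p)$, so the probability of clearing the threshold $p|S_0|+\beta\sqrt{p(1-p)|S_0|}$ is $(1+o(1))\Phibar(\beta)$; for $v\in(V\setminus S_0)\cap K$ we split $d_{S_0}(v)=d_{S_0\cap K}(v)+d_{S_0\setminus K}(v)$, note $d_{S_0\cap K}(v)\sim B(|S_0\cap K|,q)$ concentrates within $O(k^{1-\varepsilon_2})$ of $q|S_0\cap K|$, and conditioned on this the threshold event becomes a threshold event for $d_{S_0\setminus K}(v)\sim B(|S_0\setminus K|,p)$ with shifted parameter $\beta-\hat c\sqrt\alpha+O(n^{-1/2})$, whence $|\tilde V_0\cap K|=(1+o(1))\rho' k$. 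Iterating $t=\Theta(\log n)$ times, exactly as in \lemref{totalfailureprobability} with $\rho$ replaced by $\rho'$, makes the relative density of the hidden graph in $G_t$ a growing power of $\rho'/\sqrt\tau$.

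For the second phase I would prove the $G(m,p,\ell,q)$-analogue of \lemref{probcliqueverticesaremaximal}: a dense vertex has degree $\approx pm+(q-p)\ell$ and a non-dense vertex degree $\approx pm$, so Hoeffding and a union bound give that the $\ell$ dense vertices are the $\ell$ highest-degree vertices except with probability $\mathrm{e}^{-\Theta((q-p)^2\ell^2/(p(1-p)m))}$; with $m=n_t$, $\ell=k_t=(\rho')^t k$ this is $\mathrm{e}^{-\Theta(((\rho')^2/\tau)^t)}$ up to a $(p,q)$-constant, matching \corref{failureprobabilitytildeK}. Since a dense vertex of $G_t$ has $\approx q k_t$ neighbors in $K'$ and a non-dense one $\approx p k_t$, the threshold $\tfrac12(p+q)k_t$ separates them with slack $\Theta((q-p)k_t)$, so whp $\tilde K$ is exactly the set of dense vertices of $G_t$. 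The third phase is the $G(n,p,k,q)$-analogue of \lemref{findKgivensubset}: for a fixed $s$-subset of $K$, a non-dense vertex clears the threshold $\tfrac12(p+q)s$ with probability $2^{-\Theta(s)}$ (since $\tfrac12(p+q)>p$), so the same union-bound computation over $s$-subsets of $K$ and over $l$-subsets of the non-dense vertices bounds the number $l_0$ of spurious vertices in $K'$ by $O\big(\tfrac{\log n+s\log k}{\Theta(s)-\log n}\big)$ once $s$ exceeds a $(p,q)$-dependent multiple of $\log n$ — which holds because the recovered $\tilde K$ has size $s=(\rho')^t k(1-o(1))$, a positive power of $n$; then $l_0=o(k)$, every dense vertex of $G$ has at least $qk-O(k^{1-\varepsilon_3})$ neighbors in $K'$, every non-dense one at most $pk+o(k)$, and the final threshold $\tfrac12(p+q)k$ isolates $K^*=K$.

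The failure-probability accounting is then copied verbatim from \lemref{totalfailureprobability} with $\rho$ replaced by $\rho'$ and all $(p,q)$-dependent constants absorbed into the $\Theta(\cdot)$'s, giving failure probability $\mathrm{e}^{-\Theta(n^{\varepsilon_0})}$ for an appropriate $\varepsilon_0=\varepsilon_0(c)$. The step I expect to need the most care is the general-$p$ concentration input — verifying that the Berry--Esseen/Stirling estimates behind \corref{numberabovethresholdindependent} hold for $B(m,p)$ with constants uniform in $m$ for each fixed $p\in(0,1)$ — together with the variance bookkeeping in the dense-vertex case; everything else is a transcription of the $p=\tfrac12,q=1$ argument.
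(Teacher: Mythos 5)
Your proposal is correct and follows essentially the same route as the paper: prove $(p,q)$-analogues of Lemmas~\ref{nextiterationisrandom}, \ref{sizeofnextiteration}, \ref{probcliqueverticesaremaximal}, Corollary~\ref{failureprobabilitytildeK} and Lemma~\ref{findKgivensubset}, and observe that with $\rho' = (1-\alpha)\Phibar\big(\beta - c\sqrt{\alpha}\,\tfrac{q-p}{\sqrt{p(1-p)}}\big)$ the hypothesis $c > c_0\tfrac{\sqrt{p(1-p)}}{q-p}$ is exactly the condition $\rho'/\sqrt{\tau}>1$, so the failure-probability accounting of Lemma~\ref{totalfailureprobability} carries over verbatim. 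In fact your conditioning argument for the dense vertices (splitting $d_{S_0}(v)$ into a $B(|S_0\cap K|,q)$ and a $B(|S_0\setminus K|,p)$ part) spells out a step the paper leaves implicit when it says the analogue of Lemma~\ref{sizeofnextiteration} ``follows the same way.''
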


To prove \thmref{theoremforgeneralp}, as in the hidden clique case, we
first prove the correctness of each of the phases of the algorithm,
and then bound the failure probability. To prove the correctness of
the first phase, we prove Lemmas \ref{nextiterationisrandompq} and
\ref{sizeofnextiterationpq}, which are analogous to Lemmas
\ref{nextiterationisrandom} and \ref{sizeofnextiteration}. To prove
the correctness of the second phase, we prove
\lemref{probcliqueverticesaremaximalpq} and
\corref{failureprobabilitytildeKpq}, which are analogous to
\lemref{probcliqueverticesaremaximal} and
\corref{failureprobabilitytildeK}. To prove the correctness of the
third phase we prove \lemref{findingKgivensubsetpq}. The failure
probability follows as in \lemref{totalfailureprobability} by noticing
that substituting $c \tfrac{\sqrt{p(1-p)}}{q-p}$ for $c$ in the
definition of $\rho'$ gives the exact definition of $\rho$.

\section{Discussion}
Our results bring up some interesting questions for future
research. For example, one of the advantages of the algorithm
presented here is a failure probability that is less than polynomially
small in the size of the input. Experimental results shown in
\cite{FeigeRon10} suggest that the failure probability of the
algorithm described there may also be $o(1)$. Whether the analysis can
be improved to prove this rigorously is an interesting open
question. One can also ask whether the analysis in
\cite{AlonKrivelevichSudakov98} can be improved to show failure
probability that is less than polynomially small.

Aside from the most interesting open question of whether there exists
an algorithm that finds hidden cliques for $k = o(\sqrt{n})$, one can
ask about ways to find hidden cliques of size $k = c \sqrt{n}$ as $c$
gets smaller. In \cite{AlonKrivelevichSudakov98}, Alon, Krivelevich
and Sudakov give a way to improve the constant for which their
algorithm works, at the expense of increasing the running time. This
technique can be used for any algorithm that finds hidden cliques, so
we describe it here. Pick a random vertex $v \in V$, and run the
algorithm only on the subgraph containing $v$ and its
neighborhood. $v$ is a clique vertex, then the parameters of the
algorithm have improved, since instead of having a graph with $n$
vertices and a hidden clique of size $c \sqrt{n}$ we now have a graph
with $\tfrac{n}2$ vertices and a hidden clique of size $c
\sqrt{n}$. The expected number of trials we need to do until we pick a
clique vertex is $O(\sqrt{n})$. This means that if we have an
algorithm that finds a hidden clique of size $c \sqrt{n}$, where $c
\geq c_0$, we can also find a hidden clique for $c \geq
\tfrac{c_0}{\sqrt{2}}$, while increasing the running time by a factor
of $\sqrt{n}$. If we wish to improve the constant even further, we can
pick $r$ random vertices and run the algorithm on the subgraph
containing them and their common neighborhood. This gives an algorithm
that works for constants smaller by up to a factor of $2^{r/2}$ than
the original constant, at the expense of increasing the running time
of the algorithm by a factor of $n^{r/2}$. 

We have described a sequence of algorithms whose running times
increase by factors of $\sqrt{n}$. It is not known whether the
constant can be decreased if we can only increase the running time by
a factor smaller than $\sqrt{n}$.

\begin{question}
Given an algorithm that runs in time $O(n^2)$ and finds hidden cliques
of size $c \sqrt{n}$ for any $c \geq c_0$, is there an algorithm that
runs in time $O(n^{2 + \varepsilon})$, where $\varepsilon < \tfrac12$,
and finds hidden cliques of size $c \sqrt{n}$ where $c < c_0$? How
small can $c$ be as a function of $\varepsilon$?
\end{question}

\bibliography{mybib} 
\bibliographystyle{plain}

\appendix
\section{Concentration inequalities}
Throughout the paper, we use the central limit theorem for binomial
random variables, and its rate of convergence that was independently
discovered by Berry in 1941 \cite{Berry41} and by Esseen in 1942
\cite{Esseen42}. For details, see, for example \cite[\S
  Sec. 3.4.4]{Durrett10}.
\begin{theorem}[Berry, Esseen]
\label{convergencetonormal}
Let $B(n,p)$ be a binomial random variable with parameters $n,
p$. Then for every $x \in \R$
\[
\big| \pr \big( \tfrac{B(n,p) - pn}{\sqrt{p(1-p) n}} \leq x \big) -
\Phi(x) \big| = O \big( \tfrac1{\sqrt{n}} \big)~.
\]
\end{theorem}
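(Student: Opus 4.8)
The statement is the classical Berry--Esseen bound specialized to a sum of i.i.d.\ Bernoulli variables, so the plan is to run the standard characteristic-function proof while paying attention to the fact that $B(n,p)$ is a lattice distribution. Write $B(n,p) = \sum_{j=1}^{n} X_j$ with $X_j$ i.i.d.\ $\mathrm{Bernoulli}(p)$, set $Y_j = (X_j - p)/\sqrt{p(1-p)}$ so that $\E Y_j = 0$, $\E Y_j^2 = 1$, $\E |Y_j|^3 =: \mu_p < \infty$, and let $Z_n = n^{-1/2}\sum_{j=1}^{n} Y_j$, whose distribution function I denote $F_n$. The goal is $\sup_x |F_n(x) - \Phi(x)| = O(n^{-1/2})$, with the implied constant depending only on $p$ (this is harmless, since $p$ is a fixed constant in every application in this paper).

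First I would invoke Esseen's smoothing inequality: since the density $\varphi$ of $\Phi$ satisfies $\sup_x \varphi(x) = (2\pi)^{-1/2}$, for every $T > 0$ one has
\[
\sup_x |F_n(x) - \Phi(x)| \;\le\; \frac{1}{\pi}\int_{-T}^{T} \left| \frac{\phi_n(t) - e^{-t^2/2}}{t} \right| dt \;+\; \frac{C}{T},
\]
where $\phi_n(t) = \E e^{it Z_n}$ and $C$ is an absolute constant. Because the $Y_j$ are i.i.d., $\phi_n(t) = g(t/\sqrt{n})^n$, where $g(s) = \E e^{is Y_1} = e^{-isp/\sqrt{p(1-p)}}\bigl( 1-p + p\,e^{is/\sqrt{p(1-p)}} \bigr)$ is the characteristic function of a single normalized Bernoulli. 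The decisive structural fact is that $|g(s)|$ is periodic and attains the value $1$ at the nonzero multiples of $2\pi\sqrt{p(1-p)}$, so we cannot let $T \to \infty$. Instead I would fix $T = \pi\sqrt{p(1-p)}\,\sqrt{n}$: this keeps $s = t/\sqrt{n}$ within one period for $|t| \le T$, and it makes the smoothing error $C/T = O(n^{-1/2})$, which is exactly the target rate.

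It remains to bound the integral, which I would split at $|t| = \delta\sqrt{n}$ for a small constant $\delta = \delta(p)$. On the central range $|t| \le \delta\sqrt{n}$, a three-term Taylor expansion gives $g(s) = 1 - s^2/2 + r(s)$ with $|r(s)| \le \tfrac16 \mu_p |s|^3$, hence $\log g(s) = -s^2/2 + O_p(|s|^3)$ for $|s|$ small, so $\phi_n(t) = \exp\!\bigl( -t^2/2 + O_p(|t|^3/\sqrt{n}) \bigr)$; choosing $\delta$ small enough that the error exponent is at most $t^2/4$ in absolute value yields $|\phi_n(t) - e^{-t^2/2}| \le C_p\, \frac{|t|^3}{\sqrt{n}}\, e^{-t^2/4}$, and after dividing by $|t|$ (no singularity at $0$, since the linear and quadratic terms already match) and integrating over $\R$ this contributes $O_p(n^{-1/2})$. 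On the outer range $\delta\sqrt{n} < |t| \le T$, I would use the identity $|g(s)|^2 = 1 - 2p(1-p)(1-\cos u)$ with $u = s/\sqrt{p(1-p)}$ together with $1-\cos u \ge 2u^2/\pi^2$ for $|u| \le \pi$ to get $|g(s)| \le e^{-2s^2/\pi^2}$, hence $|\phi_n(t)| \le e^{-2t^2/\pi^2}$ on that whole range; then $\int_{\delta\sqrt{n} < |t| \le T} |t|^{-1}\bigl( |\phi_n(t)| + e^{-t^2/2} \bigr) dt \le (\delta\sqrt{n})^{-1} \int_{\R} \bigl( e^{-2t^2/\pi^2} + e^{-t^2/2} \bigr) dt = O(e^{-cn})$. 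Adding the three estimates gives the claim.

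The main obstacle is precisely the lattice (arithmetic) nature of the binomial: its characteristic function does not vanish at infinity, so the cutoff $T$ in Esseen's lemma is forced to grow only like $\sqrt{n}$, and one must verify that this forced choice is exactly compatible with the $O(n^{-1/2})$ rate. It is, because both the smoothing remainder $C/T$ and the Taylor tail of the central integral are of that order, and the outer range contributes only an exponentially small amount. Everything else --- Esseen's lemma itself, the Taylor estimate for $g$, and the elementary bound $1 - \cos u \ge 2u^2/\pi^2$ --- is routine; one should only note that the constant degrades like $\mu_p / (p(1-p))^{3/2}$ as $p \to 0$ or $p \to 1$, which does not affect the applications here.
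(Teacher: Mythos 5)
This theorem is not proved in the paper at all --- it is quoted as the classical Berry--Esseen bound with a citation to Berry, Esseen and Durrett --- and your sketch is the standard characteristic-function proof via Esseen's smoothing inequality (with the cutoff $T \asymp \sqrt{n}$ forced by the lattice nature of the binomial), which is precisely the argument in the cited reference and is correct. One cosmetic point: in the outer range your displayed estimate, after replacing $1/|t|$ by $1/(\delta\sqrt{n})$ and extending the integration to all of $\R$, only yields $O(n^{-1/2})$ rather than the claimed $O(e^{-cn})$; retaining the restriction $|t|>\delta\sqrt{n}$ inside the Gaussian-type integrals gives the exponential bound, and in either case the total remains $O(n^{-1/2})$, so the conclusion stands.
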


\begin{cor}
\label{expectednumberabovethreshold}
Let $B(n,p)$ be a binomial random variable. For any $a \in \R$, the
probability that $B(n,p)$ is greater than $pn + a \sqrt{p(1-p) n}$ is
bounded by
\[
\big| \pr \big( B(n,p) \geq pn + a \sqrt{p(1-p)n} \big) - \Phibar(a)
\big| \leq O \big( \tfrac1{\sqrt{n}} \big)~.
\]
\qed
\end{cor}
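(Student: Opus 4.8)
The plan is to read this off directly from the Berry--Esseen bound in \thmref{convergencetonormal}; the only genuine content is a one-line lattice correction, needed because $B(n,p)$ is integer-valued while $pn + a\sqrt{p(1-p)n}$ need not be.

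First I would set $Z_n = \tfrac{B(n,p)-pn}{\sqrt{p(1-p)n}}$, so that $\{B(n,p) \geq pn + a\sqrt{p(1-p)n}\} = \{Z_n \geq a\}$ and hence $\pr\big(B(n,p) \geq pn + a\sqrt{p(1-p)n}\big) = 1 - \pr(Z_n < a)$. The support of $Z_n$ is an arithmetic progression with common difference $\sigma_n := 1/\sqrt{p(1-p)n}$; let $a'$ be the largest point of this support with $a' < a$ (if $a$ lies below the whole support then $\pr(Z_n < a) = 0$ while $\Phi(a)$ is super-exponentially small, so the claimed bound is immediate, and otherwise $a'$ exists). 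Then $0 < a - a' \leq \sigma_n$ and $\{Z_n < a\} = \{Z_n \leq a'\}$, so \thmref{convergencetonormal} applied at the point $a'$ gives $\pr(Z_n < a) = \pr(Z_n \leq a') = \Phi(a') + O(1/\sqrt n)$, uniformly in $a'$.

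Next I would replace $\Phi(a')$ by $\Phi(a)$: since $0 < a - a' \leq \sigma_n$ and $\varphi \leq 1/\sqrt{2\pi}$, the mean value theorem gives $|\Phi(a) - \Phi(a')| \leq \sigma_n/\sqrt{2\pi} = O(1/\sqrt n)$. Combining, $\pr(Z_n < a) = \Phi(a) + O(1/\sqrt n)$, and therefore
\[
\pr\big(B(n,p) \geq pn + a\sqrt{p(1-p)n}\big) = 1 - \Phi(a) + O(1/\sqrt n) = \Phibar(a) + O(1/\sqrt n),
\]
which is the assertion. An equivalent route absorbs the strict-versus-non-strict gap through the bound $\max_j \pr(B(n,p)=j) = O(1/\sqrt n)$, which follows from Stirling's approximation exactly as in \eqref{F}.

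There is essentially no obstacle: this is a routine corollary of Berry--Esseen, and the only step that requires care is the lattice shift from $a$ to $a'$. The statement is used in the paper with $p$ a fixed constant (indeed $p = \tfrac12$), which is what makes the Berry--Esseen constant and the spacing $\sigma_n$ both $O(1/\sqrt n)$ with an absolute implied constant; were one to track the $p$-dependence it would enter only through those two quantities and would remain harmless as long as $p$ is bounded away from $0$ and $1$.
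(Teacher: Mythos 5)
Your argument is correct and follows exactly the route the paper intends: the paper states this corollary with no written proof (just a \qed), treating it as an immediate consequence of Theorem~\ref{convergencetonormal}, and your derivation simply fills in the routine lattice/strict-inequality adjustment, which is harmless since the spacing of the support of the normalized variable is $O(1/\sqrt{n})$. The only cosmetic remark is that your mean-value step assumes $a$ is not above the entire support, but that case is trivial since then both the probability and $\Phibar(a)$ are within $O(1/\sqrt{n})$ of $0$.
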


\begin{theorem}[Hoeffding's Inequality]
\label{hoeffding}
Let $S = X_1 + \cdots + X_n$ where the $X_i$'s are independent
Bernoulli random variables. Then for every $t > 0$
\[
\pr \left( \left|S - \E S \right| \geq t \right) \leq 2 \mathrm{e}^{-2
  t^2 / n}~.
\]
\end{theorem}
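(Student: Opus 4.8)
The plan is to prove this by the exponential moment (Chernoff) method, which applies to independent but not necessarily identically distributed summands, so one cannot simply quote a Binomial tail bound. First I would reduce the two-sided statement to the one-sided bound $\pr(S - \E S \geq t) \leq \mathrm{e}^{-2t^2/n}$: applying that bound to the independent Bernoulli variables $1 - X_1, \ldots, 1 - X_n$ controls $\pr(S - \E S \leq -t)$, and a union bound over the two tails contributes the factor of $2$.

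For the one-sided bound, set $p_i = \E X_i$ and $Y_i = X_i - p_i$, so that $\E Y_i = 0$ and each $Y_i$ takes values in the interval $[-p_i,\, 1 - p_i]$, which has length exactly $1$. For every $s > 0$, Markov's inequality applied to $\mathrm{e}^{s(S - \E S)}$ together with independence gives
\[
\pr(S - \E S \geq t) \;\leq\; \mathrm{e}^{-st}\,\E\big[\mathrm{e}^{s \sum_i Y_i}\big] \;=\; \mathrm{e}^{-st} \prod_{i=1}^{n} \E\big[\mathrm{e}^{s Y_i}\big].
\]
The crucial estimate is Hoeffding's lemma: a mean-zero random variable $Y$ supported in an interval of length $1$ satisfies $\E[\mathrm{e}^{sY}] \leq \mathrm{e}^{s^2/8}$. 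I would prove it by setting $\psi(s) = \log \E[\mathrm{e}^{sY}]$ and checking $\psi(0) = \psi'(0) = 0$ (the latter since $\psi'(0) = \E Y = 0$) together with $\psi''(s) \leq 1/4$ for all $s$; Taylor's theorem then yields $\psi(s) \leq s^2/8$. Substituting back gives $\pr(S - \E S \geq t) \leq \mathrm{e}^{-st + ns^2/8}$, and choosing $s = 4t/n$ to minimize the exponent produces exactly $\mathrm{e}^{-2t^2/n}$.

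I expect the only genuine work to be the proof of Hoeffding's lemma, specifically the bound $\psi''(s) \leq 1/4$: a direct computation shows $\psi''(s)$ equals the variance of $Y$ under the exponentially tilted measure $\mathrm{d}\tilde{P} \propto \mathrm{e}^{sY}\,\mathrm{d}P$, and since under $\tilde{P}$ the variable $Y$ still takes values in an interval of length $1$, that variance is at most $1/4$. Everything else — the Markov step, factoring the moment generating function over independent coordinates, the optimization over $s$, and deducing the lower tail from the upper tail applied to $1 - X_i$ — is routine. The Bernoulli hypothesis is used only to ensure that each centered summand lies in an interval of length $1$, which is precisely what Hoeffding's lemma consumes.
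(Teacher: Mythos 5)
Your proposal is correct: the Chernoff (exponential moment) step, Hoeffding's lemma proved via $\psi''(s)\le \tfrac14$ for the tilted measure, the choice $s=4t/n$, and the reduction of the lower tail to the upper tail applied to $1-X_i$ together give exactly the stated bound $2\mathrm{e}^{-2t^2/n}$. The paper does not prove this theorem at all — it is quoted in the appendix as a standard concentration inequality — so your argument is simply the canonical proof of the cited result, and there is nothing to reconcile with the paper's (nonexistent) proof.
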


\begin{cor}
\label{numberabovethresholdindependent}
Let $A,B$ be two disjoint sets of vertices in $G \in G(n,p)$ with $|A|
= n_1$ and $|B| = n_2$ such that $n_1 \leq O \left( n_2 \right)$
. Given $a \in \R$, define the random variable
\[
X = \big| \big\{ v \in A : d_B(v) \geq pn_2 + a \sqrt{p(1-p)n_2}
\big\} \big|~.
\]
Then for every $c' > 0$ and $0 < \varepsilon < \tfrac12$ it holds that
\[
\pr \big( \big| X - \Phibar(a)n_1 \big| \geq c' n_1^{1 - \varepsilon}
\big) \leq \mathrm{e}^{-c' n^{1 - 2 \varepsilon} / 2}~.
\]
\end{cor}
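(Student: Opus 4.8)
The plan is to realise $X$ as a sum of \emph{independent} indicator variables and then combine the Berry--Esseen estimate of \corref{expectednumberabovethreshold} with Hoeffding's inequality (\thmref{hoeffding}); this is exactly why the result is phrased as a corollary rather than a theorem.

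First I would isolate the structural point that makes the indicators independent. For a fixed $v \in A$, the potential edges $\{(v,u) : u \in B\}$ form $n_2$ pairs, each present independently with probability $p$, so $d_B(v)$ is distributed as $B(n_2, p)$; moreover, since $A$ and $B$ are disjoint, for distinct $v, v' \in A$ the pair sets $\{(v,u):u\in B\}$ and $\{(v',u):u\in B\}$ are disjoint, and hence the random variables $\{d_B(v)\}_{v \in A}$ are mutually independent. Letting $X_v$ be the indicator of the event $\{d_B(v) \ge pn_2 + a\sqrt{p(1-p)n_2}\}$, we get $X = \sum_{v\in A} X_v$, a sum of $n_1$ independent Bernoulli random variables.

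Second, I would control the expectation. By \corref{expectednumberabovethreshold} applied with parameter $n_2$, $\bigl|\pr(X_v = 1) - \Phibar(a)\bigr| = O(1/\sqrt{n_2})$, hence $\bigl|\E X - \Phibar(a)\, n_1\bigr| = O(n_1/\sqrt{n_2})$. This is where the hypothesis $n_1 \le O(n_2)$ enters: it turns this bias into $O(\sqrt{n_1})$, which, since $\varepsilon < \tfrac12$ forces $1-\varepsilon > \tfrac12$, is at most $\tfrac12 c' n_1^{1-\varepsilon}$ once $n$ is large enough. Then I would apply \thmref{hoeffding} to $X = \sum_{v\in A} X_v$ with $t = \tfrac12 c' n_1^{1-\varepsilon}$: because the event $\{|X - \Phibar(a)n_1| \ge c' n_1^{1-\varepsilon}\}$ is contained in $\{|X - \E X| \ge \tfrac12 c' n_1^{1-\varepsilon}\}$, its probability is at most $2\exp(-2t^2/n_1) = 2\exp(-\tfrac12 c'^2 n_1^{1-2\varepsilon})$; absorbing the leading factor $2$ and rescaling the constant $c'$ gives the stated bound (with $n_1$ in the exponent, which is the form in which the corollary is actually invoked in \lemref{sizeofnextiteration}).

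I do not expect a serious obstacle here — the content is routine given \thmref{hoeffding} and \corref{expectednumberabovethreshold}. The only two points that genuinely need care are (i) the independence of the $d_B(v)$, which is precisely why $A$ and $B$ are required to be disjoint, and (ii) checking that the Berry--Esseen correction to $\E X$ is of strictly smaller order than the deviation $n_1^{1-\varepsilon}$ we wish to rule out, which is exactly the role played by the hypothesis $n_1 \le O(n_2)$.
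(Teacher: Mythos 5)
Your proposal is correct and follows essentially the same route as the paper: realise $X$ as a sum of independent indicators (independence coming from the disjointness of $A$ and $B$), bound $|\E X - \Phibar(a)n_1|$ by $O(n_1/\sqrt{n_2}) = o(n_1^{1-\varepsilon})$ via \corref{expectednumberabovethreshold}, and then apply \thmref{hoeffding} to the remaining deviation. Your closing remark about the exponent naturally being in terms of $n_1$ matches the paper's own computation, so there is nothing further to add.
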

\begin{proof}
From \corref{expectednumberabovethreshold} we know that $\left|
\Phibar(a)n_1 - \E X \right| \leq c \tfrac{n_1}{\sqrt{n_2}}$ for some
constant $c > 0$. Therefore, by \thmref{hoeffding}, for any constant
$c'>0$,
\begin{eqnarray*}
\pr \big( \big| X - \Phibar(a) n_1 \big| \geq c' n_1^{1 - \varepsilon}
\big) & \leq & \pr \big( \big| X - \E X \big| \geq c' n_1^{1 -
  \varepsilon} - c \tfrac{n_1}{\sqrt{n_2}} \big) \\ & \leq &
\mathrm{e}^{-(c' n_1^{1 - \varepsilon} - c n_1/\sqrt{n_2})^2/n_1} \leq
\mathrm{e}^{-\frac12 c'n_1^{1 - 2 \varepsilon}}
\end{eqnarray*}
where the last inequality holds because $\tfrac{n_1}{\sqrt{n_2}} \leq
O \big( \sqrt{n_1} \big) = o ( n_1^{1 - \varepsilon} )$.
\end{proof}

\section{\texorpdfstring{The $G(n,p,k,q)$ case}{}}
\begin{lemma}[analogous to \lemref{nextiterationisrandom}]
\label{nextiterationisrandompq}
For every $i \geq 0$, the graph $G_i$ defined the $i$'th iteration of
the algorithm is a copy of $G(\tilde{n}_i, p, \tilde{k}_i,q)$.
\end{lemma}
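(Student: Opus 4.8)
The plan is to follow the proof of \lemref{nextiterationisrandom} line by line, keeping track of the two edge probabilities: $q$ for a pair of vertices both inside the planted set $K$, and $p$ for every other pair. I would argue by induction on $i$. The base case $i = 0$ is immediate: $G_0 = G$ is by hypothesis a copy of $G(n, p, k, q)$, and $(\tilde n_0, \tilde k_0) = (n, k)$.

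For the inductive step, assume $G_i$ is a copy of $G(\tilde n_i, p, \tilde k_i, q)$. I would then exhibit an equivalent, sequential way of generating $G(\tilde n_i, p, \tilde k_i, q)$ together with the extra randomness used in the $i$-th iteration: first pick the $\tilde k_i$ planted vertices (a uniformly random subset of $V(G_i)$); next pick $S_i$, including each vertex independently with probability $\alpha$; then expose every edge between $S_i$ and $V(G_i) \setminus S_i$, each such pair $(u,v)$ being present independently with probability $q$ if both $u$ and $v$ are planted and with probability $p$ otherwise. The pairs whose edges remain unexposed after this step are exactly those lying inside $S_i$ and those lying inside $V(G_i) \setminus S_i$.

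The crux is that $\tilde V_i$ is already determined by what has been exposed: it is read off from the quantities $d_{S_i}(v)$ for $v \in V(G_i) \setminus S_i$ together with the threshold $p |S_i| + \beta \sqrt{p(1-p)|S_i|}$, and all of these are functions of $S_i$ and of the exposed $S_i$-to-complement edges only. In particular the vertex set $\tilde V_i = V(G_{i+1})$, and with it the planted subset $\tilde V_i \cap K$, is fixed without looking at a single edge having both endpoints in $V(G_i) \setminus S_i$. Since $\tilde V_i \subseteq V(G_i) \setminus S_i$, every edge inside $\tilde V_i$ is still unexposed, and conditionally on all of the above it is present independently with probability $q$ when both its endpoints lie in $\tilde V_i \cap K$ and with probability $p$ otherwise; moreover, by the relabeling symmetry of the whole construction, conditioned on the realized values $\tilde n_{i+1} = |\tilde V_i|$ and $\tilde k_{i+1} = |\tilde V_i \cap K|$ the set $\tilde V_i \cap K$ is a uniformly random subset of $\tilde V_i$ of that size. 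This is exactly the assertion that $G_{i+1}$ is a copy of $G(\tilde n_{i+1}, p, \tilde k_{i+1}, q)$, completing the induction.

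I do not expect a genuine obstacle here; the argument is pure bookkeeping. The one point worth stating carefully is that in the $G(n,p,k,q)$ algorithm the membership test defining $\tilde V_i$ uses the threshold $p|S_i| + \beta\sqrt{p(1-p)|S_i|}$, which depends only on $|S_i|$ and the exposed $S_i$-to-complement edges and not on the within-$(V(G_i)\setminus S_i)$ edges that will form $G_{i+1}$; and, for full rigor, the exchangeability remark that makes the planted subset of $G_{i+1}$ uniform of its (random) size.
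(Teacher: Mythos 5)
Your proposal is correct and follows exactly the paper's argument: the paper proves this lemma by noting it is identical to the proof of \lemref{nextiterationisrandom}, namely the same inductive edge-exposure construction (pick the planted vertices, pick $S_i$, expose only the $S_i$-to-complement edges, observe that $\tilde V_i$ is determined before any edge inside it is exposed). Your added remark about exchangeability of the planted subset is a harmless extra detail, not a different route.
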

\begin{proof}
The proof is identical to the proof of \lemref{nextiterationisrandom}.
\end{proof}

\begin{lemma}[analogous to \lemref{sizeofnextiteration}]
\label{sizeofnextiterationpq}
For every $0 < \varepsilon_1, \varepsilon_2 < \tfrac12$, the set
$\tilde{V}$ satisfies $ \big| | \tilde{V} | - \tau n \big| \leq O(n^{1
  - \varepsilon_1})$ and $ \big| | \tilde{V} \cap K | - \rho' k \big|
\leq O(k^{1 - \varepsilon_2})$ whp($\mathrm{e}^{-\Theta(n^{1 - 2
    \varepsilon_1})} + \mathrm{e}^{-\Theta(k^{1 - 2
    \varepsilon_2})}$).
\end{lemma}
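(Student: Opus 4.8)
\subsection*{Proof proposal}

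The plan is to follow the proof of \lemref{sizeofnextiteration} essentially verbatim for the non-clique vertices, and to make one genuinely new estimate for the clique vertices, forced on us because in $G(n,p,k,q)$ the edges inside $K$ are no longer deterministic. Write $G \in G(n,p,c\sqrt{n},q)$ for the input graph and $S_0$ for its first random sample, so that $\tilde V_0$ is the set in question. First I would condition on the event of \lemref{sizeofS} (which concerns only the sampling of $S_0$ and so does not depend on $p,q$) that $|S_0| = (1+o(1))\alpha n$ and $|S_0 \cap K| = (1+o(1))\alpha k$; all estimates below are made conditionally on the realized values of these two quantities, so that the edges from $V \setminus S_0$ into $S_0$ remain independent biased coins. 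For $v \in (V\setminus S_0)\setminus K$ we have $d_{S_0}(v) \sim \mathrm{Bin}(|S_0|,p)$, and the threshold $p|S_0| + \beta\sqrt{p(1-p)|S_0|}$ sits exactly $\beta$ standard deviations above its mean; writing $X_v$ for the indicator of $\{v \in \tilde V_0\}$, the $X_v$ over such $v$ are mutually independent (their edge sets into $S_0$ are disjoint) and there are $(1-\alpha)n(1+o(1))$ of them, so \corref{numberabovethresholdindependent} with $A = (V\setminus S_0)\setminus K$, $B = S_0$, $a = \beta$, $\varepsilon = \varepsilon_1$ gives $\big|\,|\tilde V_0 \setminus K| - \tau n\,\big| \le O(n^{1-\varepsilon_1})$ whp($\mathrm{e}^{-\Theta(n^{1-2\varepsilon_1})}$), where $\tau = (1-\alpha)\Phibar(\beta)$. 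Since $k = c\sqrt{n} = O(n^{1-\varepsilon_1})$ for $\varepsilon_1 < \tfrac12$, the remaining contribution $|\tilde V_0 \cap K| \le k$ is absorbed into the error term, giving the claimed bound for $|\tilde V_0|$.

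The new point is the clique vertices. For $v \in (V\setminus S_0)\cap K$ every edge into $S_0 \cap K$ is present with probability $q$ and every edge into $S_0 \setminus K$ with probability $p$, so $d_{S_0}(v)$ is distributed as $\mathrm{Bin}(|S_0 \cap K|,q) + \mathrm{Bin}(|S_0 \setminus K|,p)$, a sum of two \emph{independent} binomials with \emph{different} success probabilities. Its mean is $p|S_0| + (q-p)|S_0 \cap K| = p|S_0| + (q-p)\alpha k\,(1+o(1))$ and, since $|S_0 \cap K| = o(|S_0|)$, its variance is $p(1-p)|S_0|\,(1+o(1))$; hence the algorithm's threshold lies $\beta - \dfrac{(q-p)\alpha k}{\sqrt{p(1-p)|S_0|}} + o(1) = \beta - c\sqrt{\alpha}\,\dfrac{q-p}{\sqrt{p(1-p)}} + o(1)$ standard deviations above the mean, using $k = c\sqrt{n}$ and $|S_0| = (1+o(1))\alpha n$. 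To turn this into a probability I would either quote the Berry--Esseen bound for a sum of two independent bounded variables (the third absolute-moment sum is $O(n)$ and the variance is $\Theta(n)$, so the normal-approximation error is $O(n^{-1/2})$), or, to stay within the tools already developed, condition in addition on the value of the $\mathrm{Bin}(|S_0\cap K|,q)$ summand: it has standard deviation $O(\sqrt{k}) = O(n^{1/4})$, which is $o$ of one standard deviation of $d_{S_0}(v)$, so after this further conditioning $\{v \in \tilde V_0\}$ becomes a one-binomial threshold handled directly by \corref{expectednumberabovethreshold}, and averaging back over the conditioning perturbs the answer only at lower order. Either way $\pr(v \in \tilde V_0) = \Phibar\big(\beta - c\sqrt{\alpha}\tfrac{q-p}{\sqrt{p(1-p)}}\big) + o(1) = \tfrac{\rho'}{1-\alpha} + o(1)$, with $\rho'$ exactly as defined before \thmref{theoremforgeneralp}. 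The $X_v$ over $v \in (V\setminus S_0)\cap K$ are independent, so \thmref{hoeffding} with $t = k^{1-\varepsilon_2}$ applied to these $(1-\alpha)k(1+o(1))$ variables gives $\big|\,|\tilde V_0 \cap K| - \rho' k\,\big| \le O(k^{1-\varepsilon_2})$ whp($\mathrm{e}^{-\Theta(k^{1-2\varepsilon_2})}$); exactly as in \lemref{sizeofnextiteration}, the $o(1)$ gap between the nominal center $\rho' k$ and the true conditional mean is of smaller order than $k^{1-\varepsilon_2}$ and is absorbed. Adding back the failure probability of the conditioning event from \lemref{sizeofS} yields the stated whp bound.

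The one delicate step is the clique-vertex estimate just described: in \lemref{sizeofnextiteration} the choice $q = 1$ makes the within-clique degree deterministic and the event collapses exactly onto a single binomial, whereas here $d_{S_0}(v)$ is a genuine mixture of two binomials, and one must verify both that a central-limit / Berry--Esseen estimate still applies with error $o(1)$ and that the center it produces is precisely $\Phibar\big(\beta - c\sqrt{\alpha}\tfrac{q-p}{\sqrt{p(1-p)}}\big)$ --- which is the definition of $\rho'/(1-\alpha)$, and, as remarked after \thmref{theoremforgeneralp}, is exactly what substituting $c\,\tfrac{\sqrt{p(1-p)}}{q-p}$ for $c$ turns the hidden-clique quantity $\rho$ into. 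Every other ingredient --- the independence of the per-vertex indicators, the role of \lemref{nextiterationisrandompq} in letting the later union bound (as in \lemref{totalfailureprobability}) apply this estimate to each $G_i$, and the $\varepsilon_1,\varepsilon_2$ bookkeeping --- is identical to the hidden-clique argument and needs no change.
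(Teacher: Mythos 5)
Your proposal is correct, and it follows the route the paper itself takes: the paper's proof of \lemref{sizeofnextiterationpq} is a single sentence deferring to \corref{numberabovethresholdindependent} ``the same way as in the proof of \lemref{sizeofnextiteration}.'' The one place where you do more than the paper is precisely where that one-liner glosses over a real issue: when $q<1$ the within-dense-graph degree of a clique vertex is random, so the reduction used in \lemref{sizeofnextiteration} --- rewriting the event $d_{S_0}(v)\ge \text{threshold}$ as a threshold event for the single binomial $d_{S_0\setminus K}(v)$, to which \corref{numberabovethresholdindependent} applies verbatim --- does not carry over; here $d_{S_0}(v)$ is a sum of two independent binomials with different success probabilities. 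Both of your fixes work: a Berry--Esseen bound for the non-identically-distributed sum, or conditioning on the $\mathrm{Bin}(|S_0\cap K|,q)$ summand, whose $O(n^{1/4})$ fluctuations shift the standardized threshold by only $o(1)$; the second has the advantage of staying within the tools the paper actually states, since \thmref{convergencetonormal} is phrased only for a single binomial. You also correctly observe that the per-vertex indicators remain independent (disjoint edge sets into $S_0$), so Hoeffding controls the count once the per-vertex probability is pinned down at $\rho'/(1-\alpha)+o(1)$, and that the resulting center agrees with the $\rho'$ obtained by substituting $c\sqrt{p(1-p)}/(q-p)$ for $c$. Your bookkeeping (absorbing the $o(1)$ shifts of the standardized threshold and the clique contribution to $|\tilde V_0|$ into the $O(n^{1-\varepsilon_1})$ and $O(k^{1-\varepsilon_2})$ error terms) is at the same level of rigor as the paper's own proof of \lemref{sizeofnextiteration}, which treats the $(1+o(1))$ deviations of $|S_0|$ and $|S_0\cap K|$ in the same informal way, so there is no gap relative to the paper.
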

\begin{proof}
Follows from \corref{numberabovethresholdindependent} the same way as
in the proof of \lemref{sizeofnextiteration}.
\end{proof}

\begin{lemma}[analogous to \lemref{probcliqueverticesaremaximal}]
\label{probcliqueverticesaremaximalpq}
Let $G \in G(n,p,k,q)$ where $k \geq c_0 \sqrt{n \log n}$. Denote the
hidden dense graph by $K$ and the set of $k$ largest degree vertices
by $M$. Then
\[
\pr \big( \big| K \setminus M \big| > 0 \big) \leq \mathrm{e}^{-(q -
  p) k^2 / 2 n - \log n - O(1)}~.
\]
\end{lemma}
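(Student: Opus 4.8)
The plan is to mirror the proof of \lemref{probcliqueverticesaremaximal}, replacing the two-point degree gap ($\tfrac12$ versus $\tfrac12$-boosted-by-$\tfrac{k}{4}$) by the gap that $p < q$ induces between non-clique and clique vertices. First I would fix the natural degree threshold: set $x = \tfrac12 (q-p) k$, so that a non-clique vertex is ``suspicious'' if its degree exceeds $pn + x$ and a clique vertex is ``suspicious'' if its degree falls below $pn + x$. The point is that a non-clique vertex $v$ has $d(v) \sim B(n,p)$ with mean $pn$, whereas a clique vertex $u$ has $d(u)$ distributed as $B(n-k,p) + B(k,q)$ (the $k-1$ intra-clique edges plus one for itself, versus the $n-k$ outside edges), with mean roughly $pn + (q-p)k$; the chosen threshold sits halfway between these two means.

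Next I would bound the two bad events by union bounds over binomial tails, exactly as in \lemref{probcliqueverticesaremaximal}. For the non-clique side,
\[
\pr\big(\exists v \notin K : d(v) \geq pn + x\big) \leq n\,\pr\big(B(n,p) - pn \geq \tfrac12 (q-p)k\big),
\]
and for the clique side,
\[
\pr\big(\exists u \in K : d(u) < pn + x\big) \leq k\,\pr\big(B(n-k,p) + B(k,q) - (pn + (q-p)k) \leq -\tfrac12 (q-p)k + O(1)\big),
\]
where the $O(1)$ absorbs the ``$-1$'' bookkeeping and the $p$-versus-$1$ discrepancy on the $k-1$ intra-clique edges. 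Both right-hand probabilities are controlled by \thmref{hoeffding} applied to a sum of $n$ (resp.\ $n$) independent Bernoullis: each is at most $2\exp\big(-2(\tfrac12(q-p)k)^2/n\big) = 2\exp\big(-(q-p)^2 k^2/2n\big)$. Summing the two contributions and taking $\log$ gives a bound of the form $2(n+k)\exp\big(-(q-p)^2 k^2/2n\big)$, i.e. $\exp\big(-((q-p)^2 k^2/2n - \log n - O(1))\big)$. (The stated bound in the lemma reads $(q-p)$ rather than $(q-p)^2$; either the intended constant is $(q-p)^2$, or one rescales $x$ so that the exponent is linear in $q-p$ — the structure of the argument is unaffected, and the hypothesis $k \geq c_0\sqrt{n\log n}$ is exactly what makes the exponent dominate $\log n$.)

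Finally I would assemble: on the complement of both bad events, every clique vertex has degree at least $pn+x$ and every non-clique vertex has degree strictly below $pn+x$, so the $k$ largest-degree vertices are precisely $K$, i.e. $K \setminus M = \emptyset$. The main obstacle — really the only non-mechanical point — is pinning down the clique-vertex degree distribution carefully: one must note that the $k-1$ intra-clique edges each contribute with probability $q$ while being counted against a baseline of $p$, so the genuine ``boost'' is $(q-p)(k-1)$, and verify that the lower-order terms ($q-p$ times a constant, the self-loop convention, and the Berry--Esseen-free use of Hoeffding directly on the shifted sum $B(n-k,p)+B(k,q)$) can all be swept into the additive $O(1)$ in the exponent. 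Everything else is a verbatim transcription of the $q=1$ case.
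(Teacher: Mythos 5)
Your proposal is correct and follows essentially the same route as the paper: set $x = \tfrac12(q-p)k$, apply \thmref{hoeffding} with a union bound to the non-clique degrees $B(n,p)$ and to the clique degrees $B(n-k,p)+B(k,q)$, and combine to get a bound of $2(n+k)\mathrm{e}^{-(q-p)^2k^2/2n}$. Your observation about the exponent is also consistent with the paper: its own proof yields $(q-p)^2$ rather than the $(q-p)$ appearing in the lemma statement, so that discrepancy is a typo in the statement, not a flaw in your argument.
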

\begin{proof}
Define $x = \tfrac12 (q - p) k$. Then by \thmref{hoeffding}
\[
\pr \big( \exists v \not\in K : d(v) \geq p n + x \big) \leq n \pr
\big( B \big( n,p \big) \geq p n + x \big) \leq n \pr \big( \big| B
\big(n, p \big) - p n \big| \geq x \big) \leq 2 n \mathrm{e}^{-(q -
  p)^2 k^2 / 2n}~.
\]
On the other hand,
\begin{eqnarray*}
\pr \big( \exists v \in K : d(v) < p n + x \big) & \leq & k \pr \big(
B \big( n - k,p \big) + B \big( k, q \big) - p (n-k) - q k < x - (q -
p) k \big) \\ & \leq & k \pr \big( \big| B \big(n-k, p \big) + B
\big(k, q \big) - p (n-k) - q k \big| \geq x \big) \\ & \leq & 2 k
\mathrm{e}^{-(q - p)^2 k^2 / 2n}~.
\end{eqnarray*}
Therefore, the probability that there exist a vertex $v \not\in K$ and
a vertex $u \in K$ such that $d(u) < d(v)$ is bounded by $2 (n+k)
\mathrm{e}^{-(q - p)^2 k^2/2n}$.
\end{proof}

\begin{cor}[analogous to \corref{failureprobabilitytildeK}]
\label{failureprobabilitytildeKpq}
If the algorithm does $t$ iterations before finding $\tilde{K}$ and
succeeds in every iteration, then whp($\mathrm{e}^{-\Theta ((
  \frac{\rho^2}{\tau})^t)}$), $\tilde{K}$ is a subset of the original
hidden dense graph.
\end{cor}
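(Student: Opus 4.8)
The plan is to mirror the proof of \corref{failureprobabilitytildeK} step for step, changing only the degree thresholds and inserting one extra concentration estimate that was automatic when $q = 1$. Assume, as in the hypothesis, that all $t$ iterations succeed, so that (writing $n_t = \tau^t n$ and $k_t = \rho'^t k$, the latter being the estimate of $\tilde{k}_t$ that the algorithm uses) we have $|\tilde{n}_t - n_t| \le O(n_t^{1 - \varepsilon_1})$ and $|\tilde{k}_t - k_t| \le O(k_t^{1 - \varepsilon_2})$. By \lemref{nextiterationisrandompq}, $G_t$ is a copy of $G(\tilde{n}_t, p, \tilde{k}_t, q)$. Since the algorithm runs $t = \Theta(\log n)$ iterations and $\rho'^2/\tau > 1$, the ratio $k_t^2/n_t = c^2 (\rho'^2/\tau)^t$ is a positive power of $n$; in particular $\tilde{k}_t \ge c_0 \sqrt{\tilde{n}_t \log \tilde{n}_t}$, so \lemref{probcliqueverticesaremaximalpq} applies to $G_t$ and shows that whp($\mathrm{e}^{-\Theta((\rho'^2/\tau)^t)}$) the $\tilde{k}_t$ vertices of $K$ lying in $G_t$ are precisely the $\tilde{k}_t$ highest-degree vertices of $G_t$.

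On that event, $K'$, the set of the $k_t$ highest-degree vertices of $G_t$, consists of $\min(k_t, \tilde{k}_t)$ vertices of $K$ together with $\max(0, k_t - \tilde{k}_t)$ vertices outside $K$, whence $|K' \cap K| \ge k_t - O(k_t^{1 - \varepsilon_2})$ and $|K' \setminus K| \le O(k_t^{1 - \varepsilon_2})$. Here is the new ingredient: in $G(\tilde{n}_t, p, \tilde{k}_t, q)$ distinct vertices of $K$ are adjacent independently with probability $q$, so Hoeffding's inequality together with a union bound over the $\tilde{k}_t$ vertices of $K$ in $G_t$ give that whp($\mathrm{e}^{-\Theta(k_t^{1 - 2\varepsilon_2})}$) each such vertex $v$ has at least $q \tilde{k}_t - O(\tilde{k}_t^{1 - \varepsilon_2})$ neighbours in $K \cap V(G_t)$, hence $d_{K'}(v) \ge q k_t - o(k_t)$; symmetrically, whp($\mathrm{e}^{-\Theta(k_t^{1 - 2\varepsilon_2})}$) each $w \in V(G_t) \setminus K$ satisfies $d_{K'}(w) \le p k_t + o(k_t)$. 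Since $p < q$, the threshold $\tfrac12(p+q) k_t$ defining $\tilde{K}$ lies strictly between $p k_t + o(k_t)$ and $q k_t - o(k_t)$ for $n$ large, so on the intersection of these events $\tilde{K} = \{ v \in V(G_t) : d_{K'}(v) \ge \tfrac12(p+q) k_t \}$ contains every vertex of $K$ in $G_t$ and no vertex outside $K$; in particular $\tilde{K} \subseteq K$.

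Collecting the failure probabilities --- $\mathrm{e}^{-\Theta((\rho'^2/\tau)^t)}$ from \lemref{probcliqueverticesaremaximalpq} and $\mathrm{e}^{-\Theta(k_t^{1 - 2\varepsilon_2})}$ from each Hoeffding step --- and recalling, exactly as in the clique case (where $t$ is chosen with this in mind in the analysis leading to \lemref{totalfailureprobability}), that the latter terms are of smaller order, we obtain the claimed bound whp($\mathrm{e}^{-\Theta((\rho^2/\tau)^t)}$), reading $\rho$ as $\rho'$ in the $G(n,p,k,q)$ setting in accordance with the substitution of $c$ remarked after \thmref{theoremforgeneralp}. The step I expect to require the most care is the new Hoeffding/union bound on the degrees of $K$-vertices within $K'$: when $q = 1$ the bound $d_{K'}(v) \ge k_t - O(k_t^{1 - \varepsilon_2})$ holds deterministically, whereas for $q < 1$ one must both establish the concentration and check that its $o(k_t)$ slack stays below the $\tfrac12(q - p) k_t$ margin on either side of the threshold --- which is exactly why the threshold is taken to be $\tfrac12(p+q) k_t$ rather than something like $\tfrac34 k_t$.
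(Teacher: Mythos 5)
Your proposal is correct and follows essentially the same route as the paper's (very terse) proof: invoke \lemref{probcliqueverticesaremaximalpq} so that $K'$ consists of almost exactly the dense-graph vertices of $G_t$, then use Hoeffding plus a union bound to show the degrees into $K'$ of dense-graph and non-dense-graph vertices separate on either side of the threshold $\tfrac12(p+q)k_t$ --- including the one genuinely new ingredient for $q<1$, the concentration of $K$-vertex degrees inside $K'$, which the paper states as the ``$(q-\tfrac{q-p}4)k_t - o(k_t)$ versus $(p+\tfrac{q-p}4)k_t + o(k_t)$'' observation. The only cosmetic difference is that the paper takes constant-fraction slack $\tfrac{q-p}4 k_t$ (giving per-vertex bounds $\mathrm{e}^{-\Theta(k_t)}$, always dominated by $\mathrm{e}^{-\Theta(k_t^2/n_t)}$), whereas you use $O(k_t^{1-\varepsilon_2})$ slack and then argue the resulting terms are lower order, which is fine in the parameter regime the algorithm actually uses.
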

\begin{proof}
The proof is analogous to the proof of
\corref{failureprobabilitytildeK}, by noticing that
whp($\mathrm{e}^{-\Theta (\frac{\rho^{2t}k^2}{\tau^t n})}$), every
hidden dense graph vertex in $G_t$ has at least $\big( q - \tfrac{q -
  p}4 \big) k_t - o(k_t)$ neighbors in $K'$ and every non-hidden dense
graph vertex in $G_t$ has at most $\big( p + \tfrac{q - p}4 \big) k_t
+ o(k_t)$ neighbors in $K'$.
\end{proof}

\begin{lemma}
\label{findingKgivensubsetpq}
We are given a random graph $G \in G(n,p,k,q)$, and also a subset of
the hidden dense graph $\tilde{K}$ of size $s$. Denote the hidden
dense graph in $G$ by $K$. Suppose that either
\begin{enumerate}[(a)]
\item $k = O(\log n \log \log n)$ and $s \geq \big( \tfrac2{(q - p)^2}
  + \varepsilon \big) \ln n$ for some $\varepsilon > 0$, or
\item $k \geq \omega (\log n \log \log n)$ and $s \geq \tfrac2{(q -
  p)^2} \ln n + 1$.
\end{enumerate}
Let $K'$ denote the set of vertices containing $\tilde{K}$ and all the
vertices in $G$ that have at least $\tfrac12 (p + q)s$ neighbors in
$\tilde{K}$. Define $K^*$ to be the set of vertices of $G$ that have
at least $\tfrac12 (p + q)k$ neighbors in $K'$. Then for every $0 <
\varepsilon_3 < \tfrac12$, whp($\mathrm{e}^{-\Theta(s \log k + \log
  n)} + \mathrm{e}^{-\Theta(k^{1 - 2 \varepsilon_3})}$), $K^* = K$.
\end{lemma}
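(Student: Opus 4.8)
The plan is to follow the proof of \lemref{findKgivensubset} almost verbatim, substituting the edge probabilities $(p,q)$ for $(\tfrac12,1)$ and replacing ``adjacent to every vertex of $\tilde K$'' by ``having at least $\tfrac12(p+q)$ times as many neighbours in a set as that set's cardinality''. The midpoint $\tfrac12(p+q)$ is chosen so that, for a set of size $m$, the threshold $\tfrac12(p+q)m$ lies strictly between $pm$ (the expected number of neighbours of a vertex outside $K$) and $qm$ (that of a vertex of $K$), with slack $\tfrac12(q-p)m$ on either side. Three facts suffice: (i) $K'$ contains only few vertices outside $K$; (ii) $K'$ contains all but $o(k)$ vertices of $K$; and (iii) (i) and (ii), together with degree concentration, force $K^*=K$.

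\textbf{Step (i): bounding $|K'\setminus K|$.} Fix an arbitrary $T\subseteq K$ with $|T|=s$. For $v\notin K$ the count $d_T(v)$ is distributed as $B(s,p)$, so a Chernoff bound gives that $v$ clears the $K'$-threshold for $T$ with probability at most $r:=\pr\big(B(s,p)\ge\tfrac12(p+q)s\big)\le\mathrm{e}^{-(q-p)^2 s/2}$. These events are independent over $v\in V\setminus K$, so the number of non-$K$ vertices that $T$ would contribute is stochastically dominated by $B(n-k,r)$; a union bound over the $\binom{k}{s}\le k^s$ sets $T$ (needed because $\tilde K$ is adversarial) yields
\[
\pr\big(|K'\setminus K|\ge l_0\big)\ \le\ k^s\sum_{l\ge l_0}n^l r^l\ =\ \sum_{l\ge l_0}2^{\,s\log k+l(\log n+\log r)}\ \le\ 2^{\,\log n+s\log k+l_0(\log n-\mu s)},
\]
where $\mu=(q-p)^2/(2\ln 2)$, so $r\le 2^{-\mu s}$; the last step uses that each hypothesis on $s$ forces $s>\tfrac{2}{(q-p)^2}\ln n$, hence $\mu s-\log n>0$ and the series is geometric. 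Taking $l_0=\tfrac{2(\log n+s\log k)}{\mu s-\log n}$ makes the exponent $-(\log n+s\log k)$, so whp($2^{-(s\log k+\log n)}$) we have $|K'\setminus K|\le l_0$.

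\textbf{Steps (ii)--(iii).} For $v\in K\setminus\tilde K$ the count $d_{\tilde K}(v)\sim B(s,q)$ has mean $qs$, which exceeds the threshold by $\tfrac12(q-p)s$, so the same Chernoff estimate bounds $\pr(v\notin K')$ by $r=O(1/n)$; these events are independent over $v$, so Markov's inequality gives $|K\setminus K'|=o(k)$ whp($n^{-\Theta(1)}$). Now also condition on the two standard degree events (Hoeffding and a union bound over all vertices; by the hypotheses on $s$ and $k$ this fails with probability at most $\mathrm{e}^{-\Theta(k^{1-2\varepsilon_3})}$): every $v\in K$ has $d_K(v)\ge qk-k^{1-\varepsilon_3}$ and every $v\notin K$ has $d_K(v)\le pk+k^{1-\varepsilon_3}$. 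Since $|K\setminus K'|=o(k)$, for $v\in K$ we get $d_{K'}(v)\ge d_K(v)-|K\setminus K'|\ge qk-o(k)>\tfrac12(p+q)k$ for $k$ large, so $v\in K^*$; and for $v\notin K$, $d_{K'}(v)\le d_K(v)+|K'\setminus K|\le pk+k^{1-\varepsilon_3}+l_0<\tfrac12(p+q)k$ once $l_0=o(k)$, so $v\notin K^*$. Hence $K^*=K$. To see $l_0=o(k)$: in case (b), $\mu s-\log n$ is a positive constant so $l_0=O(\log n\log k)$, and writing $k=f(n)\log n$ with $f=\omega(\log\log n)$ one gets $\log n\log k=o(k)$ exactly as in \lemref{findKgivensubset}; in case (a) the extra $\varepsilon\ln n$ makes $\mu s-\log n=\Theta(\log n)$, so $l_0=\Theta(\log k)$, and since $s\le k$ forces $k\ge\Theta(\log n)$ we have $\log k=o(k)$. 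Summing the failure probabilities of the four events gives the bound in the statement.

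\textbf{The main obstacle} is Step (i). In the clique case the relevant per-vertex probability is the clean $2^{-s}$ — a vertex outside $K$ must be adjacent to \emph{all} of $\tilde K$ — so already $s\gtrsim\log n$ beats the $k^s$ union bound. Here the best estimate for clearing the midpoint threshold is only the soft $\mathrm{e}^{-(q-p)^2 s/2}$, and the hypotheses $s\ge\tfrac{2}{(q-p)^2}\ln n+1$ (resp.\ $s\ge(\tfrac{2}{(q-p)^2}+\varepsilon)\ln n$) are precisely what is needed to make $n\,\mathrm{e}^{-(q-p)^2 s/2}<1$, so that the geometric-series bound on $l_0$ goes through; this is also the source of the natural logarithm in the statement, and the small-clique case (a) needs the slightly larger $s$ exactly so that $l_0$ comes out as $\Theta(\log k)=o(k)$ rather than $\Theta(\log n\log k)$. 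A secondary point is that the single threshold $\tfrac12(p+q)$ must simultaneously be cleared by $K$-vertices and missed by non-$K$-vertices with a $\Theta((q-p)m)$ margin, so the $o(k)$-sized errors $|K'\setminus K|$ and $|K\setminus K'|$ are harmless.
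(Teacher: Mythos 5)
Your overall strategy and most of the details coincide with the paper's proof: per-vertex Hoeffding/Chernoff bound $\mathrm{e}^{-(q-p)^2 s/2}$ for clearing or missing the midpoint threshold, a union bound over the $k^s$ possible $s$-subsets of $K$, the geometric series giving at most $l_0=\Theta\big(\tfrac{\log n+s\log k}{(q-p)^2 s/2-\log n}\big)$ bad vertices, and then the degree-concentration step inside $K$ to conclude $K^*=K$ (you even make the $l_0=o(k)$ verification explicit, which the paper leaves implicit). However, there is a genuine gap in your Step (ii). You correctly insist in Step (i) that the union bound over all $s$-subsets is needed because $\tilde K$ is adversarial (in the algorithm it is produced from the graph, so it is correlated with the edges inside $K$), but in Step (ii) you drop this and treat $d_{\tilde K}(v)$ for $v\in K\setminus\tilde K$ as a fresh $B(s,q)$ variable for the actual given $\tilde K$. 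That distributional claim is exactly what the adversarial choice invalidates: an adversary (or the earlier phases of the algorithm) could select $\tilde K$ so that particular clique vertices have atypically few neighbours in it, so the per-vertex bound $r$ does not apply to the realized $\tilde K$ without quantifying over all subsets. Moreover, even granting the distribution, your Markov step only yields failure probability $n^{-\Theta(1)}$, which is weaker than the claimed $\mathrm{e}^{-\Theta(s\log k+\log n)}$ term, e.g.\ in case (b) where $s\log k=\omega(\log n)$.

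The repair is immediate and is precisely what the paper does: define a vertex to be ``bad'' for a fixed $s$-subset $S\subseteq K$ if it is either a non-$K$ vertex with at least $\tfrac12(p+q)s$ neighbours in $S$ or a $K$-vertex with fewer than that many; both events have probability at most $\mathrm{e}^{-(q-p)^2 s/2}$ and, for fixed $S$, are independent across vertices outside $S$. Then the single union bound $k^s\sum_{l\ge l_0}n^l\mathrm{e}^{-(q-p)^2 sl/2}$ controls the total number of bad vertices of both kinds simultaneously, giving $|K'\setminus K|\le l_0$ and $|K\setminus K'|\le l_0$ whp($\mathrm{e}^{-\Theta(s\log k+\log n)}$), after which your Step (iii) goes through unchanged.
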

\begin{proof}
Consider an arbitrary subset $S$ of $K$ of size $s$. By
\thmref{hoeffding}, the probability that a specific vertex $v \not\in
K$ has more than $\tfrac12 (p + q)s$ neighbors in $S$ is bounded by
$\mathrm{e}^{-(q - p)^2 s / 2}$. The probability that a specific
vertex $v \in K$ has less than $\tfrac12 (p + q)s$ neighbors in $S$ is
bounded by the same expression. Therefore, the probability of having
at least $l_0$ ``bad'' vertices (where ``bad'' is defined by either a
vertex of $K$ that is not in $K'$ or a vertex not in $K$ that is in
$K'$) is bounded by $\sum_{l=l_0}^n n^l \mathrm{e}^{-(q-p)^2 s l /
  2}$. Taking union bound over all subsets of size $s$ of $K$ gives
that the probability that there exists a subset with at least $l_0$
bad vertices is bounded by
\[
k^s \sum_{l=l_0}^n \mathrm{e}^{l (\ln n - (q - p)^2 s / 2)} \leq n
\mathrm{e}^{s \ln k - l_0 ((q - p)^2 s / 2 - \ln n)} = \mathrm{e}^{\ln
  n + s \ln k - l_0 ((q-p)^2 s / 2 - \ln n)}~.
\]
If we take $l_0 = \tfrac{2 (\ln n + s \ln k)}{(q-p)^2 s / 2 - \ln n}$
this probability is $\mathrm{e}^{-\ln n - s \ln k}$. Therefore,
whp($\mathrm{e}^{-\ln n - s \ln k}$) there are at most $l_0$ bad
vertices in $K'$. Specifically, this implies that $K'$ contains at
least $k - l_0$ vertices from $K$ and at most $l_0$ vertices not from
$K$, and that $|K'| \leq k + l_0$. By \thmref{hoeffding} and the union
bound, the probability that there exists a vertex $v \in K$ with less
than $q k - k^{1 - \varepsilon_3} $ neighbors in $K$ is bounded by
$\mathrm{e}^{-\Theta(k^{1 - 2 \varepsilon_3})}$, and so is the
probability that there exists a vertex $v \not\in K$ with more than
$pk + k^{1 - \varepsilon_3}$ neighbors in $K$. Therefore,
whp($\mathrm{e}^{-\Theta(k^{1 - 2 \varepsilon_3})}$) the number of
neighbors every $v \in K$ has in $K'$ is at least $qk - k^{1 -
  \varepsilon_3} - l_0$, and the number of neighbors every $v \not\in
K$ has in $K'$ is at most $pk + k^{1 - \varepsilon_3} + l_0$. Thus, if
$s$ and $k$ are such that $l_0 = o(k)$ then whp($\mathrm{e}^{-\ln n -
  s \ln k} + \mathrm{e}^{-\Theta(k^{1 - 2 \varepsilon_3})}$) $K^* =
K$.
\end{proof}

\end{document}